\newcommand{\FrameboxA}[2][]{#2}
\newcommand{\Framebox}[1][]{\FrameboxA}
\newcommand{\mc}[3]{\multicolumn{#1}{#2}{#3}}
\renewcommand{\div}{\nabla\cdot\,}
\newcommand{\grad}{\nabla}
\newcommand{\im}{\textit{\i}}
\newcommand{\bfe}{{\bf e}}
\newcommand{\bfp}{{\bf p}}
\newcommand{\bfx}{ {\bf x}}
\newcommand{\bfv}{ {\bf v}}
\newcommand{\bfu}{{\bf u}}
\newcommand{\bfq}{{\bf q}}
\newcommand{\bfr}{{\bf r}}
\newcommand{\bfw}{{\bf w}}
\newcommand{\bfgamma}{{\boldsymbol \gamma}}
\newcommand{\bfmu}{{\boldsymbol \mu}}
\newcommand{\bflambda}{{\boldsymbol \lambda}}
\newcommand{\bfrho}{{\boldsymbol \rho}}
\definecolor{darkblue}{rgb}{0.0, 0.1, 0.8}
\definecolor{darkred}{rgb}{0.6, 0.0, 0.1}
\definecolor{darkpurple}{rgb}{0.5, 0.0, 0.5}
\definecolor{darkgreen}{rgb}{0.0, 0.5, 0.2}
\newcommand{\dedication}[1]{
  \begin{center}
    \emph{#1}
    \vspace{5pt}
  \end{center}
}
\title{A block-acoustic preconditioner for the elastic Helmholtz equation \thanks{Corresponding author: Rachel Yovel. \funding{This research was supported by The Israel Science Foundation (grant No. 656/23). RY is supported by the Ariane de Rothschild scholarship and by Kreitman High-tech scholarship. The authors also thank the Lynn and William Frankel Center for Computer Science at BGU.}}}
\author{Rachel Yovel\thanks{Ben-Gurion University, Beer Sheva, Israel.
  \email{yovelr@bgu.ac.il, erant@cs.bgu.ac.il}}
\and Eran Treister$^\dag$}
\begin{document}

\maketitle

\dedication{Dedicated to the memory of Howard Elman}

\begin{abstract}
We present a novel block-preconditioner for the elastic Helmholtz equation, based on a reduction to acoustic Helmholtz equations. 
Both versions of the Helmholtz equations  are challenging numerically. 
The elastic Helmholtz equation is larger, as a system of PDEs, and harder to solve due to its more complicated physics.
It was recently suggested that the elastic Helmholtz equation can be reformulated as a generalized saddle-point system, opening the door to the current development.
Utilizing the approximate commutativity of the underlying differential operators, we suggest a block-triangular preconditioner whose diagonal blocks are acoustic Helmholtz operators. 
Thus, we enable the solution of the elastic version using virtually any existing solver for the acoustic version as a black-box. 
We prove a sufficient condition for the convergence of our method, that sheds light on the long questioned role of the commutator in the convergence of approximate commutator preconditioners.
We show scalability of our preconditioner with respect to the Poisson ratio and with respect to the grid size. 
We compare our approach, combined with multigrid solve of each block, to a recent monolithic multigrid method for the elastic Helmholtz equation. 
The block-acoustic multigrid achieves a lower computational cost for various heterogeneous media, and a significantly lower memory consumption, compared to the monolithic approach. 
It results in a fast solution method for wave propagation problems in challenging heterogeneous media in 2D and 3D.

\end{abstract}

\begin{keywords}
Elastic wave modeling, elastic Helmholtz equation, saddle-point systems, block preconditioning, approximate commutators.
\end{keywords}

\begin{MSCcodes}
65N22, 74B99, 35J05, 65F10, 65F30 
\end{MSCcodes}

\section{Introduction}
The Helmholtz equations model wave propagation in the frequency domain. 
The acoustic Helmholtz equation models acoustics \cite{wang2010acoustic, elec2001acoustic} and electromagnetic waves \cite{zubeldia2012energy}, whereas the elastic Helmholtz equation models waves in solid media, such as earth subsurface \cite{wang20113d}. 
Both versions are difficult for numerical solution, and counted as open problems. 
The resulting linear systems are complex, indefinite, and large, since discretizing high-frequency waves requires very fine meshes. 
The elastic equation amplifies these difficulties over the acoustic one: as a system of equations, it is larger, and the more complicated physics --- including both shear and pressure waves --- gives rise to the need for special care. 

Many solvers and preconditioners were suggested for the acoustic Helmholtz equation, including geometric and algebraic multigrid methods 
\cite{erlangga2006novel, olson2010smoothed, oosterlee2010shifted, livshits2014scalable, tsuji2015augmented, cools2015multi}, domain decomposition methods
\cite{gander2013domain, stolk2013rapidly, tsuji2014sweeping, zepeda2018nested, heikkola2019parallel},
deflation methods
\cite{chen2024matrix},
deep learning methods 
\cite{azulay2022multigrid}, and other methods
\cite{haber2011fast, gordon2013robust, wang2020taylor}.
Yet, just a few solvers are available for the elastic version \cite{rizzuti2016multigrid, baumann2018msss, bonev2022hierarchical}, most of which are multigrid based.
In \cite{treister2024hybrid}, a monolithic multigrid solver is proposed, achieving scalability with respect to the Poisson ratio. 
A main ingredient in the method suggested there is the introduction of a mixed formulation for the elastic Helmholtz equation. 
That is, writing the equation as a generalized saddle-point system with an indefinite leading block. 
This formulation opens the door to specifically tailored block preconditioners for the elastic Helmholtz equation.

There is an abundance of research on block preconditioners for saddle-point systems, especially in the context of incompressible fluid flow. 
For a comprehensive overview, see the review \cite{benzi2005numerical}, Chapter 9 in the book \cite{elman2014finite}, and references therein. 
A main approach is the block-diagonal or block-triangular Schur-complement preconditioners, based on a block elimination of the given system. 
Eigenvalue bounds for Schur-complement preconditioners are given in \cite{murphy2000note, axelsson2006eigenvalue}, extended in \cite{ipsen2001note} to the non-symmetric case. 
However, forming and inverting the Schur-complement is very costly, and broad research in the last decades is dedicated to  the search of cheap approximations for the inverse of the Schur-complement. 

One way to approximate the inverse Schur-complement is based on the notion of approximate commutators \cite{elman1997perturbation, silvester2001efficient, elman2006block, elman2008least, elman2009boundary,  pearson2015development}, that was suggested in the context of incompressible fluid flow.
The $F_p$, or pressure convection-diffusion (PCD) preconditioner \cite{elman2005preconditioning}, creates an approximation of the leading block by re-discretizing it on the pressure space.
The BFBt, or least squares commutator (LSC) preconditioner \cite{elman2005preconditioning, elman2008least}, is based on a more algebraic observation: 
seeking an approximation for the leading block in the pressure's space that will minimize the commutator in a least squares sense. 
Weighted adaptations of the BFBt preconditioner directed to Stokes-like systems with variable viscosity were suggested in \cite{rudi2015extreme, rudi2017weighted}. 
Some Schur-complement-free preconditioners were also suggested, directly utilizing the commutation relations to prevent the inversion of the leading block, see the pressure-Poisson approach  \cite{shirokoff2011efficient, johnston2004accurate}. 

However, most of the research in this direction is dedicated to incompressible fluid flow problems, which are different in nature from the elastic Helmholtz equation.
First, the elastic Helmholtz equation typically deals with compressible materials.
Algebraically speaking, the resulting saddle-point matrix for the elastic Helmholtz equation has a nonzero regularization block, regardless of the discretization.
Second, the leading block is indefinite.
There is some literature on the non-SPD case, e.g, the well-known augmented Lagrangian approach \cite{golub2003solving, benzi2006augmented}, that deals with the case of a singular leading block.
To the best of our knowledge, no approximate commutator preconditioners were suggested for generalized saddle-point systems with an indefinite leading block, prior to this work. 

In this work, we present a Schur-complement-free approximate commutator preconditioner.
Our block-acoustic preconditioner is block-triangular, and its diagonal blocks are acoustic Helmholtz operators. 
Reducing a large, coupled problem into smaller decoupled problems requires less memory access in residual computations. By this, the method introduces natural parallelization.
Moreover, this reduction-based framework enables the use of any advanced solver for acoustic Helmholtz within the solution of the elastic version. 
Our preconditioner scales well with respect to the Poisson ratio, and given a direct solve of each block, is also scalable with respect to the grid size. 
We combine our preconditioner with a multigrid solve of each block, and show that it outperforms monolithic multigrid, for real world geophysical problems in 2D and 3D.

The paper is organized as follows: in Section \ref{sec:background} we give mathematical background, including multigrid methods and approximate commutator preconditioners.
In Section \ref{sec:derivation} we derive the method and prove a theoretical result, shedding light on the role of the commutator in our method.
In Section \ref{sec:results}
we test our method numerically, and finally, in Section \ref{sec:conclusion} we give concluding remarks and discuss future work.

\section{Background}\label{sec:background} 
In this section we present the acoustic and elastic Helmholtz equations. Furthermore, we give some general background on multigrid and specifically on the multigrid methods for the Helmholtz equations that we use in this work. Finally, we give some background on approximate commutator preconditioning.

\subsection{The acoustic and elastic Helmholtz equation}\label{subsec:helm_background}
Let $p = p(\vec{x}), \, \vec{x}\in\Omega$ be the Fourier transform of the wave's pressure field, let $\omega = 2\pi f$ be the angular frequency, $\kappa = \kappa(\vec{x}) > 0$ the ``slowness'' of the wave in the medium (the inverse of the wave velocity).
Let $\rho = \rho(\vec{x}) > 0$ be the density of the medium and $q(\vec{x})$  the source of the waves. Then the acoustic Helmholtz equation is given by 
\begin{equation} \label{eq:acousitcHelm}
\rho \div \left(\rho^{-1}\grad p\right) + \omega^{2}\kappa^2\left(1-\frac{\gamma}{\omega}\im\right)p = q
\end{equation}
where $\im$ stands for the imaginary unit and $\gamma$ represents the physical attenuation.
To solve this equation numerically, we discretize it in a finite domain by a finite differences scheme. 
It is usually equipped with absorbing boundary conditions (ABC) \cite{engquist1977absorbing} or perfectly matched layers (PML) \cite{berenger1994perfectly, singer2004perfectly, harari2000analytical, rabinovich2010comparison}, to mimic the propagation of a wave in an open domain, and to avoid reflections from the boundary. 

The elastic Helmholtz equation in an isotropic medium is given by
\begin{equation}\label{eq:elasticHelm}
\grad\lambda\div\vec u + \vec\div\mu\left(\vec\grad\vec u+\vec\grad\vec u^T\right) +\rho \omega^2\left(1-\frac{\gamma}{\omega}\im\right) \vec u = \vec q_{s}
\end{equation}
where $\vec u = \vec u(\vec x)$ is the displacement vector,
or by the following formulation\footnote{The two formulations are equivalent in homogeneous media, and resembles each other well for heterogeneous media.}
\begin{equation}\label{eq:elasticHelmEquivalent}
\grad(\lambda + \mu)\div\vec u + \vec\div\mu\vec\grad\vec u +\rho \omega^2\left(1-\frac{\gamma}{\omega}\im\right) \vec u = \vec q_{s},
\end{equation}
where $\lambda=\lambda(\vec x)$ and $\mu=\mu(\vec x)$ are the Lam{\'e} coefficients, that represents the stress-strain relationship in the material. These coefficient determine the pressure wave velocity $V_p = \sqrt{(\lambda+2\mu)/\rho}$ and the shear wave velocity $V_s = \sqrt{\mu/\rho}$, see e.g. \cite{li2016optimal}. The Poisson ratio is given by $\sigma = \lambda/2(\lambda+\mu)$, and describes deformation properties of the material. For most materials, $0<\sigma<0.5$, and in the nearly incompressible case (that is counted as the most difficult case), $\sigma\to 0.5$ or $\lambda \gg \mu$. 

In the incompressible case, the elastic Helmholtz equation is reducible to the acoustic Helmholtz equation: substituting $\mu=0$ in \eqref{eq:elasticHelmEquivalent} and applying $\rho\div\rho^{-1}$ on both sides, yields
\begin{equation}\label{eq:mu0_reduction}
\rho\div\rho^{-1}\grad\lambda (\div\vec u)+\frac{\rho\omega^{2}}{\lambda}\left(1-\frac{\gamma}{\omega}\im\right)\lambda\div\vec u = \rho\div\rho^{-1}\vec q_{s}.
\end{equation}
That is, an acoustic Helmholtz equation, similar to \eqref{eq:acousitcHelm}, with $\lambda\div\vec u$ as the unknown scalar function. Note that the wave velocity of \eqref{eq:mu0_reduction} is equal to the pressure wave velocity of the original equation, \eqref{eq:elasticHelmEquivalent}.
Such a reduction is the core idea behind our method.
However, the reduction above is possible (and exact), only when $\mu=0$, the material is incompressible and there are no shear waves. A similar reduction in the compressible case is less trivial, as it involves both shear and pressure wave velocities. 

\begin{figure}
  \centering
  \includegraphics[width=0.35\textwidth]{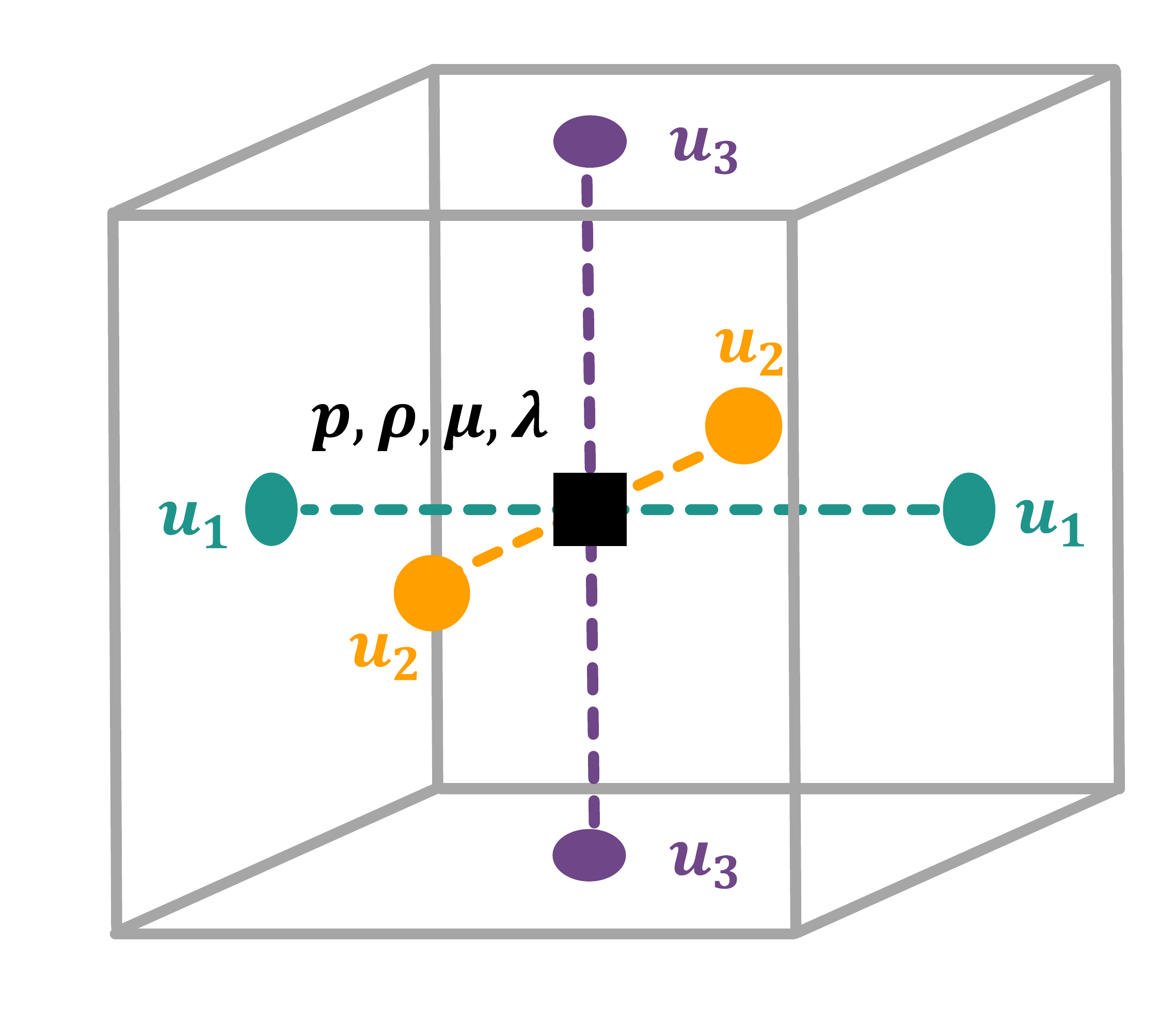}\\
  \caption{A MAC discretization cell in 3D.}\label{fig:cell}
\end{figure}

The mixed formulation, originally suggested for linear elasticity problems \cite{gaspar2008distributive,zhu2010efficient}   and recently suggested for the elastic Helmholtz equation \cite{treister2024hybrid}, is derived as follows. By introducing a new pressure variable $p = -(\lambda + \mu)\div\vec{u}$ and substituting it in \eqref{eq:elasticHelmEquivalent},  we have
\begin{equation}\label{eq:mixed-continuous}
\begin{pmatrix}
-\vec\grad\cdot \mu \vec\grad - \rho \omega^2 \left(1-\frac{\gamma}{\omega}\im\right) & \grad \\
-\div & -\frac{1}{\lambda+\mu}
\end{pmatrix}
\begin{pmatrix}
\vec{u} \\
p
\end{pmatrix}
= \begin{pmatrix}
-\vec{q}_s \\
0
\end{pmatrix}.
\end{equation}
We discretize \eqref{eq:mixed-continuous} using a marker and cell (MAC)  discretization \cite{harlow1965numerical} with standard second-order finite difference stencil. We locate the displacement components 
$\bfu_1,\bfu_2$ and $\bfu_3$ 
on different faces, and the pressure 
$\bfp$ 
as well as the physical variables 
$\bfgamma,\bfrho,\bflambda,\bfmu$ 
in the cell centers, see Figure \ref{fig:cell}.
The discretized system is given by
\begin{equation}\label{eq:mixed-discretized}
\underbrace{
\begin{pmatrix}
A & B^T \\
B & -C
\end{pmatrix}
}_{\mathcal{H}}
\begin{pmatrix}
\vec\bfu\\
\bfp \end{pmatrix}
\coloneqq
\begin{pmatrix} \vec{\grad}_h^T A_e(\bfmu)\vec{\grad}_h - \omega^{2}M & \grad_h \\ \grad^T_h & \mbox{diag}\left(-\frac{1}{\bflambda + \bfmu}\right) \end{pmatrix}
\begin{pmatrix}
\vec\bfu\\
\bfp \end{pmatrix} = \begin{pmatrix}
-\vec\bfq\\
0 \end{pmatrix}
\end{equation}
where $A_e(\cdot)$ is an edge-averaging operator and
$M=A_f(\bfrho(1-(\bfgamma/\omega) \im)$ is a mass matrix, with $A_f(\cdot)$ being a face-averaging operator.
A key feature of the resulting matrix, is that its main block is a block-diagonal matrix, comprised of three acoustic Helmholtz operators (in 3D). 
We utilize this block structure, depicted in Figure \ref{fig:blocks_mixed_3d}, in our preconditioning approach.
I.e., the block-acoustic preconditioner that we present in the following section is a block-triangular matrix with acoustic Helmholtz operators on its diagonal, see Fig. \ref{fig:blocks_prec_3d}.

\begin{figure}
\begin{center}
	\newcommand{\image}[1]{\includegraphics[width=0.25\linewidth]{#1}}
    \subfigure[\footnotesize Operator]{\image{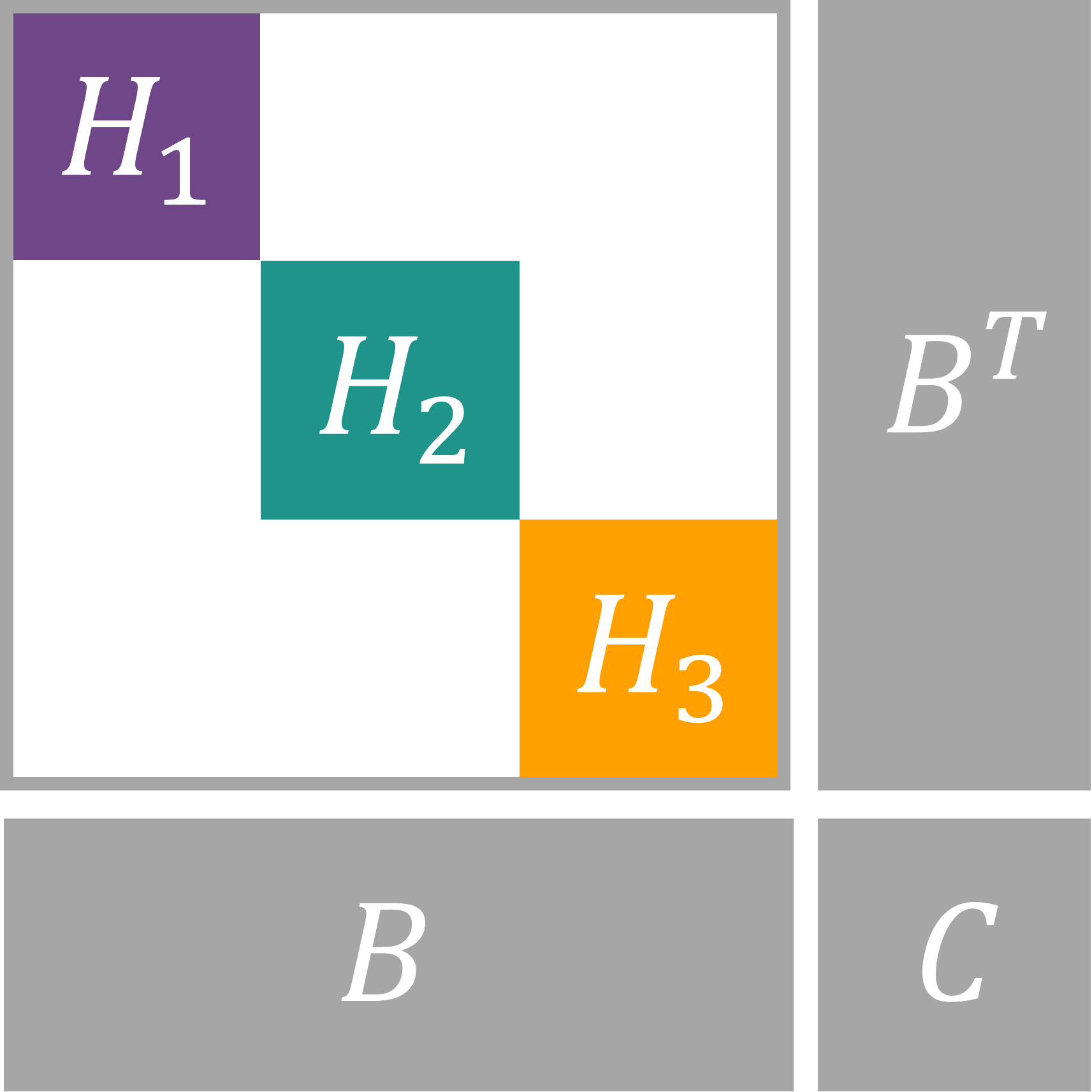}\label{fig:blocks_mixed_3d}}
    \hspace{50pt}
    \subfigure[\footnotesize Preconditioner]{\image{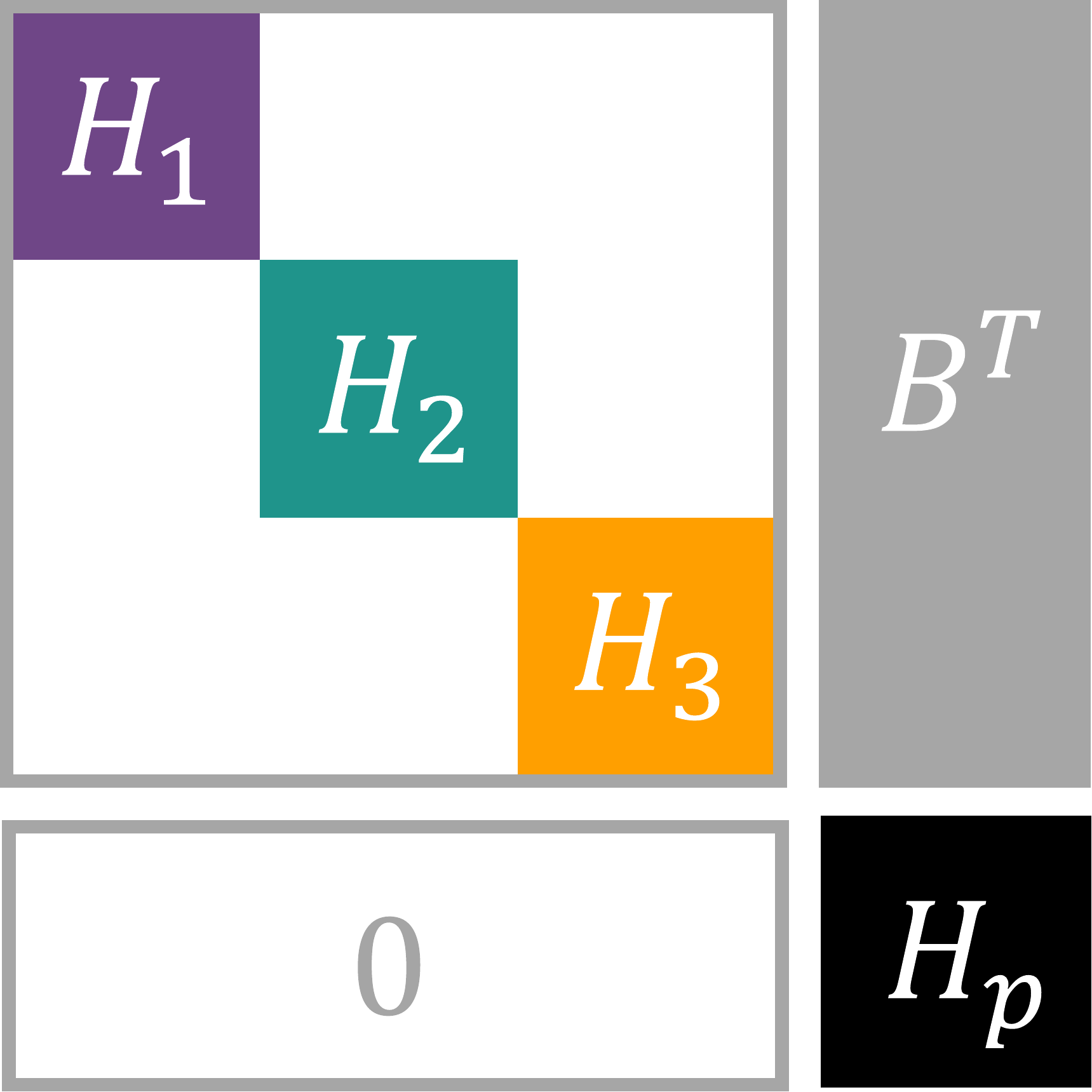}\label{fig:blocks_prec_3d}}\\
\end{center}
\caption{On the left, the block structure of the discretized elastic Helmholtz equation in mixed formulation in 3D. 
On the right, the block structure of the block-acoustic preconditioner in 3D.
}\label{fig:Blocks}
\end{figure}

\subsection{Shifted Laplacian multigrid}\label{subsec:sl}

The complex shifted Laplacian multigrid preconditioner (CSLP) \cite{erlangga2006novel} is a well known approach for the solution of the acoustic Helmholtz equation. 
Recently, an efficient adaptation for this method have been developed for the elastic Helmholtz equation  \cite{treister2024hybrid}.
Before presenting the two methods, we give general background on multigrid. 

Multigrid methods \cite{brandt1977multi} are a family of iterative solvers for linear systems of the form $H\bfu=\bfq$,
that emerges from discretizations of PDEs.
These methods are based on two complementary process: 
\emph{smoothing}, or relaxation, to annihilate the high frequency modes of the error, and \emph{coarse grid correction} to annihilate the remaining low frequency error modes. 
The former is done by applying an iterative method with smoothing properties, and the latter is done by estimating and correcting the error $\bfe$, typically by solving a coarser analogue of the problem. 
The translation between the coarse and fine grids is done by intergrid operators called \emph{restriction} $R$ and \emph{prolongation} $P$. 
Algorithm \ref{alg:TwoCycle} summarizes the process using two grids. 
By treating the coarse problem recursively with one recursive call, we obtain the multigrid V-cycle, and by treating the coarse problem recursively with two recursive calls we obtain a W-cycle. 
For a more detailed description, see \cite{briggs2000multigrid, trottenberg2000multigrid}.

\begin{algorithm}
\caption{Two-grid cycle}\label{alg:TwoCycle}
\begin{algorithmic}[1]
\Statex \textbf{Algorithm:} $\bfu\leftarrow TwoGrid(H,\bfq,\bfu).$
\State Apply pre-relaxations: $\mathbf{u} \gets \text{Relax}(H,\mathbf{u},\mathbf{q})$
\State Compute and restrict the residual: $\mathbf{r}_c \gets P^\top(\mathbf{q} - H\mathbf{u})$
\State Compute $\mathbf{e}_c$ as the solution of the coarse-grid problem $H_c\mathbf{e}_c = \mathbf{r}_c$
\State Apply coarse-grid correction: $\mathbf{u} \gets \mathbf{u} + P\mathbf{e}_c$
\State Apply post-relaxations: $\mathbf{u} \gets \text{Relax}(H,\mathbf{u},\mathbf{q})$
\end{algorithmic}
\end{algorithm}

Standard multigrid methods are not effective in the solution of the acoustic Helmholtz equation~\eqref{eq:acousitcHelm}. 
As observed in \cite{elman2001multigrid}, the error amplification factor of a coarse grid correction, for a given eigenvector is given by
\begin{equation}\label{eq:1-lam}
1-\frac{\lambda_h}{\lambda_H}
\end{equation}
when $\lambda_h$ is the eigenvalue corresponding to a given eigenvector on the fine grid, and $\lambda_H$ is the eigenvalue corresponding to its coarsening. 
In indefinite systems, when exposed to many near-zero eigenvalues of different signs, the coarsened version of the same vector can lead to an eigenvalue of opposite sign, hence causing divergence of the method. 
An additional complex shift can make the expression in \eqref{eq:1-lam} smaller than one, and hence promise convergence of the multigrid method.
The CSLP approach \cite{erlangga2006novel} is based on this observation. Let $H$ be a matrix defined by a discretization of the Helmholtz operator. Define an attenuated operator
\begin{equation}\label{eq:shift}
H_s = H - \im\alpha\omega^2M_s,
\end{equation}
where $M_s$ is some mass matrix, and $\alpha>0$ is a shifting parameter. 
The shifted version can be solved by multigrid, and serves as a preconditioner for a discretized acoustic Helmholtz equation inside a suitable Krylov method such as (flexible) GMRES \cite{saad1993flexible} or BiCGSTAB \cite{van1992bi}.

For the elastic Helmholtz equation, however, the shifted Laplacian preconditioner performs poorly, without additional adaptations. 
It was suggested in \cite{treister2024hybrid}, to apply a zero-padded shift only on the leading block of the mixed formulation \eqref{eq:mixed-continuous}, rather than shifting the original formualtion \eqref{eq:elasticHelmEquivalent}. 
Together with Vanka relaxation as a smoother, it resulted in an efficient monolithic multigrid preconditioner for the elastic Helmholtz equation. 
This preconditioner scales well with respect to the Poisson ratio, unlike previous multigrid methods for elastic Helmholtz. Yet, it does not scale with respect to the grid size, as the added shift depends on the frequency. 
This behavior is also evident in CSLP for the acoustic equation.

One of the major difficulties in the research of Helmholtz equation (acoustic or elastic) is achieving a wavenumber independent convergence, or, a scalability  with respect to the grid size. 
High-frequency waves require fine meshes, at least 10 grid points per wavelength for standard second-order discretizations. To keep a constant ratio of grid points per wavelength,  the frequency $\omega$ increases in proportion to the grid size. It results in a larger shift in \eqref{eq:shift}, for larger grids. Hence, one cannot expect a grid resolution independence whenever using a shift.

\subsection{Approximate commutator preconditioners} \label{subsec:approx_com_background}

An abundance of applications give rise to saddle-point systems, such as computational fluid dynamics, constrained optimization, finance, optimal control, and discretization of coupled PDE's, to name a few.
Schur-complement preconditioning is a popular approach for saddle-point systems, see e.g. \cite{elman2005preconditioning, silvester2001efficient}. A generalized saddle-point system and its block-triangular approximate Schur-complement preconditioner are typically given by
\begin{equation}\label{eq:general_saddle_and_prec}
\mathcal{A} = 
\begin{pmatrix}
A & B^T \\
B & -C
\end{pmatrix},
\qquad
\mathcal{P} = 
\begin{pmatrix}
A & B^T \\
0 & \tilde{S}
\end{pmatrix}
\end{equation}
where $\tilde{S}$ is an approximation of the Schur-complement $S = C + BA^{-1}B^T$.
Forming and inverting the exact Schur-complement is \emph{very costly} and during the past decades, there is an extensive search for cheap, yet reliable, approximations for its inverse.

Approximate commutator preconditioners form an important family of approximate Schur-complement preconditioners. 
The idea of approximate commutator preconditioning is based on utilizing the commutativity of the following differential operators in the continuous world\footnote{Note that for MAC discretization with constant coefficients and periodic boundary conditions, the commutation is exact in the discrete world, as observed in \cite{brandt1979multigrid}.}
\begin{equation}\label{eq:vecId}
\div \vec\Delta = \Delta\div,
\end{equation}
to construct easy-to-invert approximations for the Schur-complement.
In typical applications, $B$ from Eq. \eqref{eq:general_saddle_and_prec} represents a discrete minus divergence operator, and $A$ is a discrete minus vector Laplacian, with or without an added mass term or convective term. 
The continuous commutation relations \eqref{eq:vecId} does not take into account the locations of the variables in a staggered discretization.
To utilize similar commutation relations in the discrete space, one might look for an operator $A_p$, that mimics $A$ but lives in the pressure's space, such that the commutator
\begin{equation}\label{eq:commutator}
\Xi = BA - A_p B
\end{equation}
is small, in some sense.

Based on this notion, the $F_p$ \cite{elman2005preconditioning} or PCD \cite{elman2009boundary} preconditioner was suggested, for standard Stokes-like systems with $C=0$. In this approach, the Schur-complement is approximated by
\begin{equation}\label{eq:fp}
BA^{-1}B^T \approx B D_u B^T A_p^{-1} D_p \quad \text{or} \quad BA^{-1}B^T \approx D_p A_p^{-1} B D_u B^T
\end{equation}
where $D_u$ and $D_p$ are the lumped velocity and pressure mass matrices resulting from the finite element discretization, and $A_p$ is a re-discretization of $A$ on the pressure's space. 
Another well-studied approximate commutator approach for Stokes-like systems is the BFBt  \cite{elman2005preconditioning, elman2009boundary} or LSC \cite{elman2008least} preconditioning approach, giving the following approximation for the Schur-complement:
\begin{equation}\label{eq:wbfbt}
BA^{-1}B^T \approx (BD_u^{-1}B^T)(BD_u^{-1}AD_u^{-1}B^T)^{-1}(BD_u^{-1}B^T).
\end{equation}
Many works have been done on BFBt-like preconditioners \cite{elman2006block, elman2008taxonomy, kay2002preconditioner, silvester2001efficient}. Some where focused on the adjustment of the method to Stokes problems with variable viscosity. The weighted BFBt methods given in \cite{rudi2015extreme, rudi2017weighted}, for instance, suggest replacing $D_u$ in \eqref{eq:wbfbt} by $\text{diag}(A)$ or by a scaling matrix with the square root of the the viscosity values, to deal with Stokes problems of variable viscosity.

\section{Derivation and analysis of the preconditioner}\label{sec:derivation}

The recent introduction of a mixed-formulation for the elastic Helmholtz equation \eqref{eq:mixed-discretized} raises the need for new preconditioners. 
The resulting saddle-point system has a highly indefinite leading block, non-zero $C$-block and  varying coefficients. 
To this end, we present an approximate commutator Schur-complement-free preconditioner, whose blocks are acoustic Helmholtz operators with shear and pressure wave velocities.

Inspired by the notion of distributive relaxation \cite{brandt1979multigrid, hackbusch2013multi, elman2014finite}, we present our preconditioner as an approximation of a distributed operator.
Applying a left distributor to the system \eqref{eq:mixed-discretized} gives
\begin{equation}\label{eq:mixed-distributor}
\begin{pmatrix}
I & 0 \\
B & -A_p
\end{pmatrix}
\begin{pmatrix}
A & B^T \\
B & -C
\end{pmatrix}
\begin{pmatrix}
\vec\bfe_{\vec{u}}\\
\bfe_{p} \end{pmatrix} = \begin{pmatrix}
I & 0 \\
B & -A_p
\end{pmatrix}\begin{pmatrix}
\vec\bfr_{\vec{u}}\\
\bfr_p \end{pmatrix},
\end{equation}
where $A_p$ should be taken as an acoustic Helmholtz operator with \emph{shear} wave velocity (to mimic $A$), discretized in the pressure's locations. 
We take
\begin{equation}\label{eq:Ap}
A_p \coloneqq BB^T\bfmu - \omega^2 M_p
\end{equation}
where $M_p = \text{diag}\left(\bfrho(1-(\bfgamma/\omega) \im)\right).$
The distributed system is
\begin{equation}\label{eq:mixed-distributed}
\underbrace{
\begin{pmatrix}
A & B^T \\
\Xi & H_p
\end{pmatrix}}
_{\mathcal{K}}
\begin{pmatrix}
\vec\bfe_{\vec{u}}\\
\bfe_{p} \end{pmatrix} 
= 
\begin{pmatrix}
\vec\bfr_{\vec{u}}\\
B\vec\bfr_{\vec u} - A_p\bfr_p 
\end{pmatrix}
\end{equation}
where $\Xi$ is the commutator, as in \eqref{eq:commutator}, and
\begin{equation} \label{eq:Hp}
H_p \coloneqq BB^T + A_p C
\end{equation}
is an acoustic Helmholtz operator with \emph{pressure} wave velocity, discretized on the pressure's space.
Our block-acoustic preconditioner is then given by
\begin{equation}\label{eq:prec}
\mathcal{P} = \begin{pmatrix}
A & B^T \\
0 & H_p
\end{pmatrix}.
\end{equation}
Note that $A$ is a block-diagonal matrix with acoustic Helmholtz operators of shear wave velocity on its diagonal.
Recall that the pressure wave velocity is typically lower than shear wave velocity, making $H_p$ is easier to solve iteratively compared to the diagonal blocks comprising $A$.
The block structure of $\mathcal{P}$, comprised of acoustic Helmholtz diagonal blocks discretized in different locations, is depicted in Fig. \ref{fig:blocks_prec_3d}.
This structure enables the solution of three (in 2D) or four (in 3D) acoustic problems instead of one elastic Helmholtz problem.

\begin{remark}[Choice of $A_p$]
In \eqref{eq:Ap}, we chose to put the weighting $\bfmu$ to multiply the Laplacian from the right. 
One might consider taking $\tilde{A}_p = BA_f(\bfmu)B^T - \omega^2 M_p$ instead of $A_p$, placing the weighting between the minus divergence and the gradient, which better resembles the blocks of $A$, according to \eqref{eq:mixed-discretized}.
Nevertheless, our aim is to reduce the commutator $\Xi$ in \eqref{eq:commutator}.
Specifically, using $A_p$, the dominant part of our commutator (neglecting mass terms) ends up being
\begin{equation}\label{eq:commutator_no_mass}
B((\div)_h A_e(\bfmu) \grad_h) - B B^T \bfmu B,
\end{equation}
while using $\tilde{A}_p$, the corresponding part of the commutator would have been
\begin{equation}\label{eq:commutator_no_mass_tilde}
B((\div)_h A_e(\bfmu) \grad_h) - B A_f(\bfmu)B^T B.
\end{equation}
In \eqref{eq:commutator_no_mass}, a derivative operator is applied to $\bfmu$ twice from the left in both summands.
On the other hand, in \eqref{eq:commutator_no_mass_tilde}, the right term has only one derivative applied to $\bfmu$ from the left.
For smooth media, $\tilde{A}_p$ and $A_p$ yields equivalent methods. Yet, we see in practice a significant gain for choosing $A_p$ for non-smooth media.
\end{remark}

\subsection{A sufficient condition for convergence} \label{subsec:thm}

The commutator has a crucial role in the designing of approximate commutator preconditioners.
However, as Elman, Silvester and Wathen noticed in their book \cite{elman2014finite}, Remark 9.5, approximate commutator preconditioners can sometimes be effective even when the commutator is not small in norm. 
We recall that there, the discussed problems are related to incompressible fluid flow and have a positive definite leading block.
In the context of high frequency elastic Helmholtz problems, we observe the opposite phenomenon: 
even when the commutator is small in norm, the method might diverge.
The theorem below explains why the commutator alone does not suffice to predict convergence of our preconditioner, and suggests a more reliable measure.

\begin{theorem}\label{thm:thm}
Let $\mathcal{K}$ be the distributed matrix from \eqref{eq:mixed-distributed} and let $\mathcal{P}$ be the preconditioner from \eqref{eq:prec}. Denote by $n$ the size of $A$ and by $m$ the size of $A_p$ from \eqref{eq:Ap}.
Then the preconditioned matrix $\mathcal{P}^{-1} \mathcal{K}$ has an eigenvalue $\lambda=1$ with multiplicity of at least $n$.
Namely, the corresponding error iteration matrix 
$T = I-\mathcal{P}^{-1}\mathcal{K}$
has a nullspace of dimension at least $n$.

Moreover, Let $Z$ be the $m\times m$ matrix
\begin{equation}\label{eq:Z}
Z = \Xi A^{-1}B^T H_p^{-1}.
\end{equation}
Then 
\begin{equation}
\text{\upshape{spec}}(T) = \text{\upshape{spec}}(Z) \cup \{0\},
\end{equation}
and consequently, $\rho(T)=\rho(Z).$
\end{theorem}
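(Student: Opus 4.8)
The plan is to compute $T = I - \mathcal{P}^{-1}\mathcal{K}$ explicitly using the block structure of both matrices, exploiting the shared blocks $A$ and $B^T$ in the first block-row of $\mathcal{K}$ and $\mathcal{P}$. First I would invert $\mathcal{P}$ in block form: since $\mathcal{P} = \begin{pmatrix} A & B^T \\ 0 & H_p \end{pmatrix}$ is block upper-triangular with invertible diagonal blocks $A$ and $H_p$ (these are the acoustic Helmholtz operators, assumed nonsingular), we have $\mathcal{P}^{-1} = \begin{pmatrix} A^{-1} & -A^{-1}B^T H_p^{-1} \\ 0 & H_p^{-1} \end{pmatrix}$. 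Then $\mathcal{P}^{-1}\mathcal{K} = \begin{pmatrix} A^{-1} & -A^{-1}B^T H_p^{-1} \\ 0 & H_p^{-1} \end{pmatrix}\begin{pmatrix} A & B^T \\ \Xi & H_p \end{pmatrix}$, and multiplying out the blocks gives the $(1,1)$ block $I - A^{-1}B^T H_p^{-1}\Xi$, the $(1,2)$ block $A^{-1}B^T - A^{-1}B^T H_p^{-1}H_p = 0$, the $(2,1)$ block $H_p^{-1}\Xi$, and the $(2,2)$ block $H_p^{-1}H_p = I$. Hence $\mathcal{P}^{-1}\mathcal{K} = \begin{pmatrix} I - A^{-1}B^T H_p^{-1}\Xi & 0 \\ H_p^{-1}\Xi & I \end{pmatrix}$, which is block lower-triangular.

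From this, $T = I - \mathcal{P}^{-1}\mathcal{K} = \begin{pmatrix} A^{-1}B^T H_p^{-1}\Xi & 0 \\ -H_p^{-1}\Xi & 0 \end{pmatrix}$. The second block-column is entirely zero, so $T$ has a nullspace containing all vectors of the form $(0, \bfv)$ with $\bfv \in \R^m$ — but that only gives dimension $m$, not $n$. To get the claimed dimension $n$, I would instead observe that $T$ is similar to a matrix whose nontrivial action lives on an $m$-dimensional subspace: indeed $T = \begin{pmatrix} A^{-1}B^T \\ -H_p^{-1} \end{pmatrix} H_p^{-1}\Xi \begin{pmatrix} 0 & I_m \end{pmatrix}$... more cleanly, $T$ has rank at most $m$ (its columns all lie in the span of the $m$ columns coming from $H_p^{-1}\Xi$-type terms), but more precisely the range of $T$ is contained in the column space of $\begin{pmatrix} A^{-1}B^T H_p^{-1}\Xi \\ -H_p^{-1}\Xi \end{pmatrix}$, which has at most $m$ columns, so $\operatorname{rank}(T) \le m$, giving $\dim\nullspace(T) \ge n + m - m = n$. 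This establishes the first claim: $\lambda = 1$ is an eigenvalue of $\mathcal{P}^{-1}\mathcal{K}$ with multiplicity at least $n$.

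For the spectrum statement, I would use the standard fact that for block-triangular matrices the spectrum is the union of the spectra of the diagonal blocks, but here $T$ is not block-triangular in a way that isolates $Z$ directly — so instead I would use the Sylvester-type / cyclic identity $\operatorname{spec}(XY) \setminus \{0\} = \operatorname{spec}(YX)\setminus\{0\}$. Write $T = U V$ where $U = \begin{pmatrix} A^{-1}B^T \\ -H_p^{-1} \end{pmatrix}$ (size $(n+m)\times m$) and $V = \begin{pmatrix} H_p^{-1}\Xi & 0 \end{pmatrix}$ (size $m \times (n+m)$); one checks $UV = \begin{pmatrix} A^{-1}B^T H_p^{-1}\Xi & 0 \\ -H_p^{-1}H_p^{-1}\Xi & 0\end{pmatrix}$ — wait, the $(2,1)$ entry should be $-H_p^{-1} \cdot H_p^{-1}\Xi$, which does not match $-H_p^{-1}\Xi$, so I need to choose the factorization more carefully, e.g. $U = \begin{pmatrix} A^{-1}B^T H_p^{-1} \\ -I_m \end{pmatrix}$ and $V = \begin{pmatrix} \Xi & 0\end{pmatrix}$, giving $UV = \begin{pmatrix} A^{-1}B^T H_p^{-1}\Xi & 0 \\ -\Xi & 0 \end{pmatrix}$ — still a sign/placement mismatch with $T$'s $(2,1)$ block $-H_p^{-1}\Xi$. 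The correct split is $U = \begin{pmatrix} A^{-1}B^T \\ -I_m \end{pmatrix}$, $V = \begin{pmatrix} H_p^{-1}\Xi & 0 \end{pmatrix}$, so that $T = UV$ exactly. Then $VU = H_p^{-1}\Xi \cdot A^{-1}B^T = H_p^{-1}(\Xi A^{-1}B^T)$, which is similar (conjugate by $H_p$) to $\Xi A^{-1}B^T H_p^{-1} = Z$. Applying $\operatorname{spec}(UV)\setminus\{0\} = \operatorname{spec}(VU)\setminus\{0\} = \operatorname{spec}(Z)\setminus\{0\}$, together with the fact that $0 \in \operatorname{spec}(T)$ (since $n \ge 1$ forces $T$ singular) yields $\operatorname{spec}(T) = \operatorname{spec}(Z) \cup \{0\}$, and hence $\rho(T) = \rho(Z)$ since the spectral radius is unaffected by adjoining $0$.

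The main obstacle I anticipate is purely bookkeeping: getting the exact block forms of $\mathcal{P}^{-1}$ and $T$ right, and then picking the factorization $T = UV$ that reproduces $T$ \emph{on the nose} (not just up to similarity), so that the $\operatorname{spec}(UV) = \operatorname{spec}(VU) \cup \{0\text{'s}\}$ lemma applies cleanly and lands precisely on $Z = \Xi A^{-1}B^T H_p^{-1}$ after the harmless conjugation by $H_p$. Everything else — invertibility of $A$ and $H_p$, the rank bound giving nullity $\ge n$, and the "$0$ is always in the spectrum" remark — is routine once the block algebra is laid out.
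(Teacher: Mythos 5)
Your proposal is correct and follows essentially the same route as the paper: the same explicit block computation of $\mathcal{P}^{-1}\mathcal{K}$, a rank-$\le m$ observation to get the nullity bound of $n$, and the cyclic identity $\operatorname{spec}(UV)\setminus\{0\}=\operatorname{spec}(VU)\setminus\{0\}$ to land on $Z$. The only cosmetic difference is that the paper first reduces $\operatorname{spec}(T)$ to $\operatorname{spec}(Y)\cup\{0\}$ with $Y=A^{-1}B^TH_p^{-1}\Xi$ via block-triangularity and then transplants eigenvectors between $Y$ and $Z$ by hand, whereas you apply the cyclic identity directly to the full matrix $T=UV$ and absorb the conjugation by $H_p$ at the end; both are sound.
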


\begin{proof}
We first prove that the multiplicity of the eigenvalue $1$ in the preconditioned system is at least $n$.
A straightforward calculation shows
\begin{equation}\label{eq:precond_system}
\mathcal{P}^{-1}\mathcal{K} = \begin{pmatrix}
A^{-1} & -A^{-1}B^T H_p^{-1} \\
0 & H_p^{-1}
\end{pmatrix}
\begin{pmatrix}
A & B^T \\
\Xi & H_p
\end{pmatrix}
=
\begin{pmatrix}
I_n - A^{-1}B^T H_p^{-1}\Xi & 0 \\
H_p^{-1}\Xi & I_m
\end{pmatrix}.
\end{equation}
Therefore, it is readily seen that $\mathcal{P}^{-1}\mathcal{K}$ has an eigenvalue $1$ with multiplicity of at least $m$. Furthermore, the $n\times n$ matrix
\begin{equation} \label{eq:Y}
Y = A^{-1}B^T H_p^{-1}\Xi
\end{equation}
is rank-deficient, with 
\begin{equation}
\text{rank}(Y)\leq\text{rank}(B^T) \leq m.
\end{equation}
Hence, $Y$ has a nullspace of at least $n-m$ dimensions, and consequently, $I-Y$ has the eigenvalue $1$ with a multiplicity of at least $n-m$. Overall, the multiplicity of the eigenvalue $1$ in $\mathcal{P}^{-1}\mathcal{K}$ is at least $n$.

Second, we prove that 
\begin{equation}
\text{spec}(T) = \text{spec}(Y).
\end{equation}
A straightforward calculation gives
\begin{equation}\label{eq:error_iter_mat}
T = I - \mathcal{P}^{-1}\mathcal{K} =
\begin{pmatrix}
Y & 0 \\
-H_p^{-1}\Xi & 0
\end{pmatrix}.
\end{equation}
Evidently, the eigenvalues of $T$ are those of $Y$ and zero. Since $Y$ is a rank deficient matrix, it is therefore clear that the spectrums of $Y$ and $T$ are equal.

Finally, we prove that
\begin{equation}
\text{spec}(Y)\setminus \{0\} = \text{spec}(Z) \setminus \{0\}.
\end{equation}
Indeed, let $(\lambda,\bfv)$ be an eigenpair of $Y$ with $\lambda\neq 0$. Denote $\bfw=\Xi\bfv$. Then,
\begin{equation}\label{eq:Yv}
Y\bfv = A^{-1}B^TH_p^{-1}\bfw = \lambda \bfv 
\quad \Rightarrow \quad
\Xi Y\bfv = Z\bfw = \lambda \bfw.
\end{equation}
Notice that $\bfw$ is a nonzero vector (otherwise $\lambda$ would be zero).
Hence, $(\lambda,\bfw)$ is an eigenpair of $Z$, and thus
$
\text{spec}(Y) \setminus \{0\} \subseteq \text{spec}(Z) \setminus \{0\}.
$
The inclusion in the other direction $\supseteq$ is achieved similarly, by multiplying $Z\bfv = \lambda\bfv$ from the left by $A^{-1}B^TH_p^{-1}$. 
Consequently, the spectrum of the error iteration matrix is $\text{spec}(Z)\cup \{0\}.$
\end{proof}

As an immediate consequence of Theorem \ref{thm:thm}, a fixed-point iteration using the preconditioner (without a Krylov method),
$\bfx^{(k+1)} = \bfx^{(k)} + \mathcal{P}^{-1}\bfr^{(k)}$,
converges if and only if $\rho(Z)<1$. Moreover, $\rho(Z)$ gives an upper bound for the asymptotic convergence rate of such a method.
Clearly, the convergence of the fixed-point iteration implies the convergence of the corresponding preconditioned Krylov method, and hence $\rho(Z)<1$ is a sufficient condition for convergence of our method.
Yet, the preconditioned Krylov method might converge rapidly even when $\rho(Z)>1$ and the rate also depends on the scattering of the eigenvalues.
To simplify the discussion below, we refer to $\rho(Z)$ as a measure for convergence, even though our method is implemented as a preconditioned Krylov iteration.

Theorem \ref{thm:thm} sheds light on the role of the commutator in determining the convergence:
it suggests the spectrum of $Z$ from Eq. \eqref{eq:Z} as a better measure for convergence.
The matrix $Z$ manages to encapsulate the influence of the frequency and attenuation, which the commutator fails to detect.
Even a small commutator --- multiplied by the inverse of $A$ and of $H_p$ --- might lead to a significantly larger $\rho(Z)$, for high frequency and low attenuation problems.

In fact, we observe that the frequency and the attenuation have a little to no effect on the commutator: two problems with the same heterogeneity model but different frequency and attenuation, have almost the same commutator norm.
This intriguing phenomenon can be explained by splitting the commutator to a Laplacian-related (high-order derivatives) part and a mass-related (low-order derivatives) part:
\begin{equation}
\Xi = \underbrace{B(\vec\div A_e(\bfmu) \vec\grad) - (BB^T\bfmu)B}_{\Xi_{lap}}
- \underbrace{\omega^2(1-(\gamma/\omega)\im) BA_f(\bfrho) - \text{diag}(\bfrho)B}_{\Xi_{mass}}.
\end{equation}
Typically we observe that $\Xi_{mass}\ll\Xi_{lap}$, because we have only first derivatives in $\Xi_{mass}$ (represented by $B$) compared to higher-order derivatives in $\Xi_{lap}$. 
Evidently, the commutator is not sensitive to changes in frequency and attenuation that comprises the mass term.

Nevertheless, the convergence \emph{is} sensitive to the changes in the mass term, and in some cases it requires special treatment.
The convergence deteriorates as the frequency grows and improves as the attenuation grows.
For smooth media, the commutator is typically small enough to overcome near zero eigenvalues in $A$ and $H_p$. 
However, in highly heterogeneous high frequency problems, an additional shift, physically representing an added artificial attenuation, might be necessary for the block-acoustic preconditioner to converge (similarly to its necessity in the context of shifted Laplacian multigrid). 
The additional attenuation shifts the eigenvalues of $A$ and in $H_p$ in the complex plane, hence preventing near zero eigenvalues.
Nevertheless, we still want to solve the original problem, and the shifted problem acts only as an aid, so a large shift can hamper the convergence and interfere with the scalability.

To summarize, although it was long ago stated that the commutator does not determine the convergence, to the best of our knowledge, no alternative measure was suggested prior to Theorem \ref{thm:thm} above. 
We note that forming $Z$ is expensive computationally and requires the inversion of $A$ and $H_p$, hence it should not be seen as a predictive tool, rather as an analytical one, which explains the convergence patterns we see in practice.
Furthermore, although Theorem \ref{thm:thm} is formulated for the specific case of our block-acoustic preconditioner, it gives rise to similar results for other approximate commutator preconditioners, which we leave for future investigation.

\section{Numerical results}\label{sec:results}

\begin{figure}
\begin{center}
	\newcommand{\image}[1]{\includegraphics[width=0.32\linewidth]{#1}}
    \subfigure[\footnotesize $\rho$]{\image{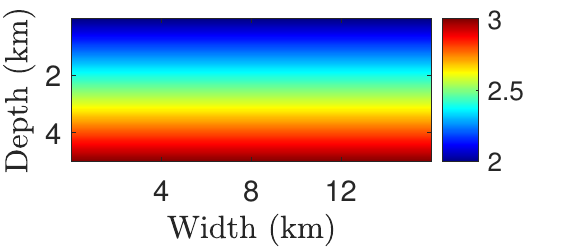}} \label{fig:linear_rho}
    \subfigure[\footnotesize $V_p$]{\image{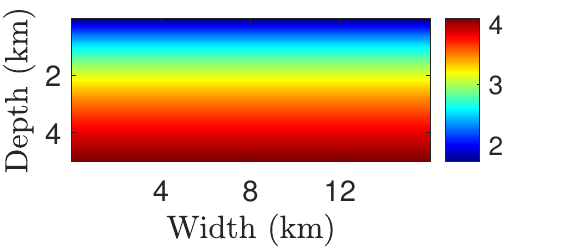}}\label{fig:linear_vp}    
        \subfigure[\footnotesize $V_s$]{\image{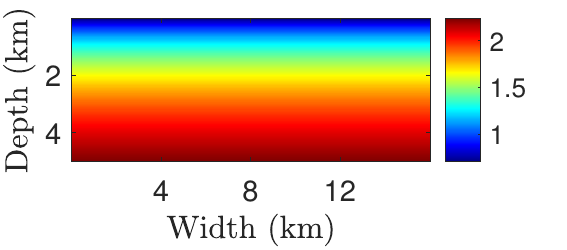}}\label{fig:linear_vs}    
\\
\end{center}
\caption{The elastic linear model. Velocity units: $km/sec$, density units: $kg/m^3$.
}\label{fig:Linear}
\end{figure}

\begin{figure}
\begin{center}
	\newcommand{\image}[1]{\includegraphics[width=0.32\linewidth]{#1}}
    \subfigure[\footnotesize $\rho$]{\image{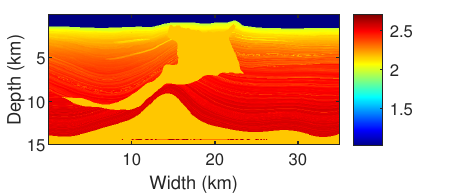}} \label{fig:SEAMrho}
    \subfigure[\footnotesize $V_p$]{\image{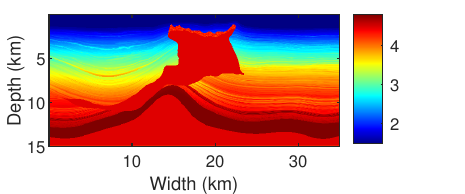}}\label{fig:SEAMvp}    
        \subfigure[\footnotesize $V_s$]{\image{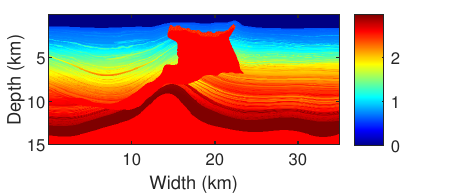}}\label{fig:SEAMvs}    
\\
\end{center}
\caption{Elastic version of SEG advenced modeling (SEAM phase I). Velocity units: $km/sec$, density units: $kg/m^3$.
}\label{fig:SEAM}
\end{figure}

In this section we perform numerical experiments to establish the efficiency and robustness of our method.
In Subsection \ref{subsec:scalability} we demonstrate scalability properties of our preconditioner, and in Subsection \ref{subsec:thm_results} we verify the theoretical result of Theorem \ref{thm:thm} and its implications. 
In Subsection \ref{subsec:approx_commutator_comparison} we compare our block-acoustic preconditioner to other approximate commutator preconditioners, and discuss the limitations of this comparison. 
In Subsection \ref{subsec:monolithic_comparison},
we show the superiority of our preconditioner --- with a shifted Laplacian multigrid solve of each block --- over the monolithic multigrid method suggested in \cite{treister2024hybrid}.
Finally, in Subsection \ref{subsec:3d} we demonstrate the effectiveness of our preconditioner in solving real-world 3D problems with challenging geophysical media.

Throughout the experiments, we solve the dicretized elastic Helmholtz problem in mixed formulation \eqref{eq:mixed-discretized} on a finite rectangular domain (or cubic, in 3D), subject to a point-source located in the center of the top edge (or face) of the domain, up to a tolerance of relative residual $<10^{-6}$. 
The ABC are implemented as a layer of gradually increasing attenuation $\gamma$, of width of 20 grid cells.
This is additional to a basic natural attenuation of $\gamma=0.01\pi$ that we assume throughout the domain, in all of our experiments.
The default frequency corresponds to at least $10$ grid points per shear wavelength. 
That is, in heterogeneous media and unless specified otherwise, the most challenging region has about $10$ grid points per shear wavelength.

Our code is written in the {\tt Julia} language \cite{Julia}, and is included as a part of the {\tt jInv.jl} package \cite{ruthotto2017jinv}. This package enables using of our code as a forward solver for elastic full waveform inversion in the frequency domain. Our tests were computed on a simple dual-core laptop with 32 GB RAM, running Windows 10.

For the 2D experiments in the next three subsections, we use the following models:
\begin{itemize}
\item
\textbf{Homogeneous media:} a dimensionless domain $\Omega = [0,16]\times[0,5]$, with constant $\rho,$ $\lambda$ and $\mu$. Our default values are $\rho=\mu=1$ and $\lambda=16$, which corresponds to Poisson ratio $\sigma = 0.47$. 
As noted before, in our discretization, the commutation is exact for this media in the interior of the domain.
However, due to the application of ABC, the coefficient $\gamma$ non-constant, even for constant media. 
\item
\textbf{Linear media:} a domain of width 16 $km$ and depth 5 $km$, with density and Lam{\'e} coefficients that varies linearly in the vertical dimension. Our default ranges are $\rho\in[2,3],$ $\mu\in[1,15]$ and $\lambda\in[4,20]$, which corresponds to Poisson ratio of at most $\sigma = 0.4$. Figure \ref{fig:Linear} shows the density and wave velocities. 
\item
\textbf{SEAM phase I model:} (SEG Advanced Modeling). A geophysical elastic model of 35 $km$ width and 15 $km$ depth \cite{fehler2008seg} with density and wave velocities as in Figure \ref{fig:SEAM}.
We exclude the seabed on the top of the domain.
\item
\textbf{SEG/EAGE salt model:} a geophysical acoustic model of 4.2 $km$ depth and 13.5 $km$ width, see \cite{ricker}, with pressure wave velocity as in Figure \ref{fig:SEG}. 
Inspired by \cite{brossier2010data}, we extend it to elastic model by assuming a constant Poisson ratio $\sigma=1/3$, reflected by the choice $V_s=0.5V_p$, 
and we define
$\rho = 0.25V_p+1.5$, to yield values that ranges similarly to the elastic Marmousi2 model below.
\item
\textbf{Marmousi2 media:} an elastic 2D geophysical model suggested in \cite{martin2006marmousi2}, based on a section of the Kwanza basin in Angola. The domain is very shallow: 17 $km$ width and only 3 $km$ depth, so we add a vertical extension of 16 cells in the bottom, to facilitate the application of the ABC. Figure \ref{fig:Marmousi} shows the (non-extended) model.
\end{itemize}

\begin{figure}
  \centering
  \includegraphics[width=0.32\textwidth]{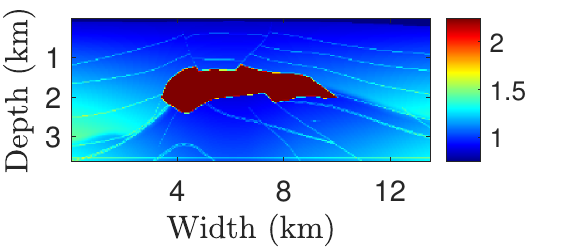}\\
  \caption{SEG/EAGE salt model. Velocity units: $km/sec$}\label{fig:SEG}
\end{figure}

\begin{figure}
\begin{center}
	\newcommand{\image}[1]{\includegraphics[width=0.32\linewidth]{#1}}
    \subfigure[\footnotesize $\rho$]{\image{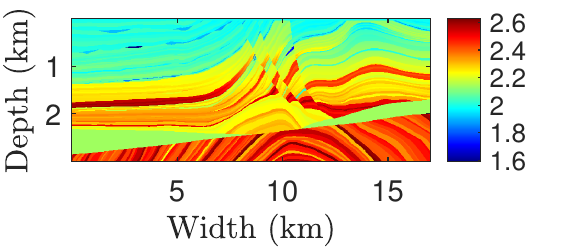}} \label{fig:MarmousiRho}
    \subfigure[\footnotesize $V_p$]{\image{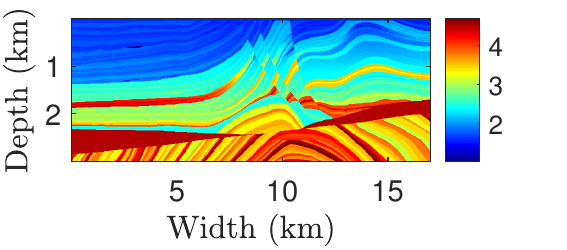}}\label{fig:MarmousiVp}    
        \subfigure[\footnotesize $V_s$]{\image{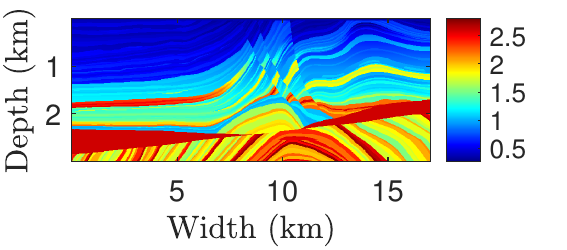}}\label{fig:MarmousiVs}    
\\
\end{center}
\caption{The elastic Marmousi2 2D model. Velocity units: $km/sec$, density units: $kg/m^3$.
}\label{fig:Marmousi}
\end{figure}

\begin{table} 
\centering
\begin{tabular}{c|ccccc}
\toprule
  \mc{6}{c}{Average magnitude of $\Xi$}\\
\midrule
& \small Const & \small Linear & \small SEAM & \small SEG salt & \small Marmousi \\
\midrule
\small $G_s = 10,\alpha = 0.0$ & 0.00383 & 0.13041 & 0.24057 & 0.63255 & 2.60292 \\
\small $G_s = 10,\alpha = 0.1$ & 0.00383 & 0.13041 & 0.24057 & 0.63254 & 2.60292 \\
\small $G_s = 100,\alpha=0.0$ & 3.74e-5 &  0.12925 & 0.23989 & 0.62605 & 2.60024 \\
\bottomrule
 \end{tabular}
\caption{Average magnitude of an element in the commutator $\Xi$ for different models. $G_s$ is the number of grid points per shear wavelength and $\alpha$ is the shift parameter.}
\label{tab:commutator_models}
\end{table}

In Table \ref{tab:commutator_models} we measure the average magnitude of a commutator entry for different scenarios.
The results demonstrate that the Marmousi2 model presents a notably higher level of difficulty compared to other models.
It is also shown that for highly heterogeneous media, changes in frequency (reflected by $G_s$, the number of grid points per shear wavelength) or complex shift $\alpha$ have negligible effect on the commutator.

\subsection{Scalability}\label{subsec:scalability}

In the first experiment, we show the scalability properties of our preconditioner.
Table \ref{tab:scalability_linear} counts the non-restarted GMRES iterations needed to solve the linear model problem for different grid sizes.
We repeat the experiment for three more variants of the default linear model, in which the $\lambda$ range is multiplied by $10$, $100$ and $1000$.
The aim of this modification is enlarging the Poisson ratio while retaining the smoothness of the media.
The iteration count does not exceed $19$, for any of the grid sizes and Poisson ratios examined.
That is, the method is scalable with respect to the Poisson ratio and with respect to the grid size.

\begin{table} 
\centering
\begin{tabular}{cl|cccc}
\toprule
  \mc{6}{c}{Non-restarted GMRES iteration count, linear media}\\
\midrule
Grid size (cells) & frequency & $\,\,\,\lambda\cdot1\,\,\,$ & $\,\,\lambda\cdot10\,\,$ &  $\,\lambda\cdot100\,$ &  $\lambda\cdot1000$ \\
\midrule
$200\times 64$ & $\omega = 1.75\pi$ & 14 & 13 & 13 & 13 \\
$400\times 128$ & $\omega = 3.5\pi$ & 14 & 13 & 12 & 12 \\
$800\times 256$ & $\omega = 7\pi$ & 14 & 16 & 12 & 11 \\
$1600\times 512$ & $\omega = 14\pi$ & 14 & 19 & 12 & 12 \\
\bottomrule
 \end{tabular}
\caption{Number of block-acoustic preconditioning cycles needed for convergence for the 2D elastic Helmholtz equation in linear media. 
Each acoustic block is solved directly.
Linear media with different Poisson ratios  were implemented by a point-wise multiplication of $\lambda$ by different factors.  
The highest factor applied to $\lambda$ corresponds to a Poisson's ratio $\sigma\geq0.4996$ all over the grid.}
\label{tab:scalability_linear}
\end{table}

The latter is analogous to a property sought in many works regarding the acoustic Helmholtz equation: \emph{wavenumber independent convergence.}
Table \ref{tab:scalability_linear} shows a nearly constant iteration count, regardless of the frequency, which is chosen for each grid size to keep a ratio of about $10$ grid points per shear wavelength in the most challenging regime of the domain.
However, in large real-world instances, it is impractical to solve each block directly as we do here, and most existing iterative solvers for acoustic Helmholtz require a shift --- which interferes with the scalability with respect to the grid size.
Yet, the results above gives rise to future investigation of the acoustic Helmholtz equation: 
once an acoustic scalable solver will be developed, it will be automatically applicable for the elastic case via our preconditioner, without harming the scaling properties.

\subsection{Demonstration of Theorem \ref{thm:thm}}\label{subsec:thm_results}

In this subsection we verify numerically the result from Theorem \ref{thm:thm} and study its implications.
Recall that $Z$, defined in \eqref{eq:Z}, is an $n_{cells}\times n_{cells}$ matrix that --- as proved in Theorem \ref{thm:thm} --- captures all the spectral information on the error iteration matrix $T$.

In Figure \ref{fig:spec} we depict the spectrum of $Z$ for different media and different frequencies, represented as different numbers of grid points per shear wavelength, $G_s$. 
Since forming $Z$ and calculating its entire set of eigenvalues is very costly, we hold this experiment for a small $50\times 50$ cells slice of the homogeneous media, of the linear media (of original size $200\times 64$ cells) and of Marmousi2 media (of original size $544\times 128$ cells). 
We stress that this slicing causes loss of some of the heterogeneity encapsulated by the media, therefore, we draw only qualitative conclusions from the comparison.
We observe that the spectrum becomes more scattered as the heterogeneity and non-smoothness grows. 
Moreover, we observe that when a higher frequency is taken for the same media, some eigenvalues scatter far away from zero. 
As expected from the discussion in Subsection \ref{subsec:thm}, the near zero eigenvalues of the highly indefinite $A$ causes this phenomenon, that can be relaxed by an additional shift.

\begin{figure}
\begin{center}
	\newcommand{\image}[1]{\includegraphics[width=0.32\linewidth]{#1}}
    \subfigure[\footnotesize Homogeneous, $G_s=100$]{\image{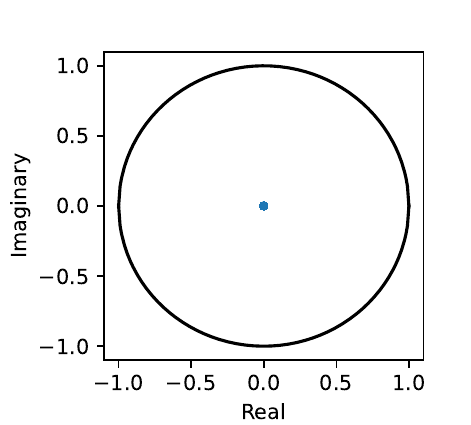}\label{fig:eigsZ_const_50_50}} 
    \subfigure[\footnotesize Linear, $G_s=100$]{\image{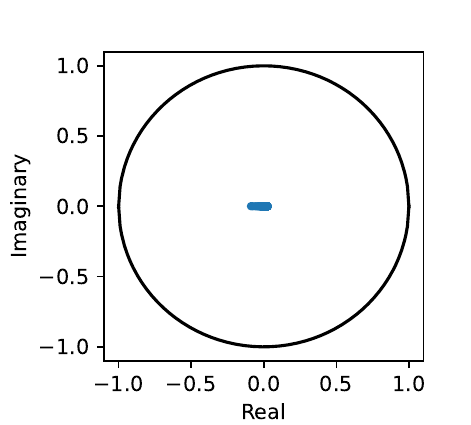}\label{fig:eigsZ_linear_50_50}} 
    \subfigure[\footnotesize Marmousi2, $G_s=100$]{\image{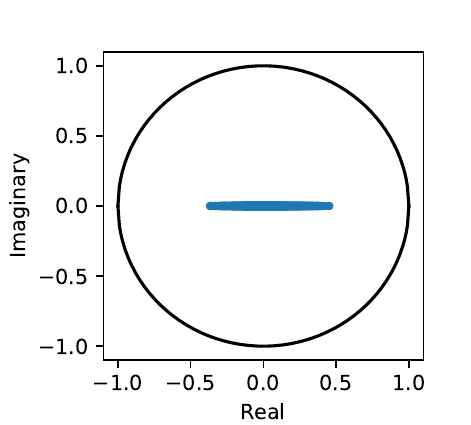}\label{fig:eigsZ_Marmousi_50_50}}    
\\
\subfigure[\footnotesize Homogeneous, $G_s=10$]{\image{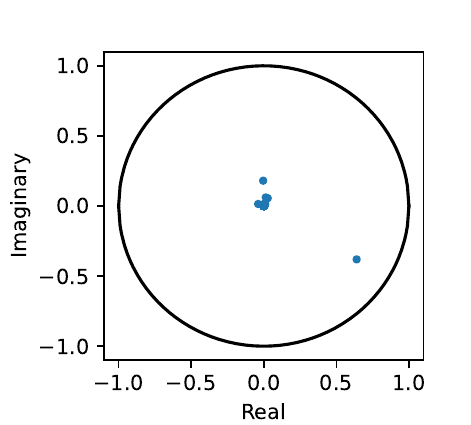}\label{fig:eigsZ_const_50_50_omega1}} 
    \subfigure[\footnotesize Linear, $G_s=10$]{\image{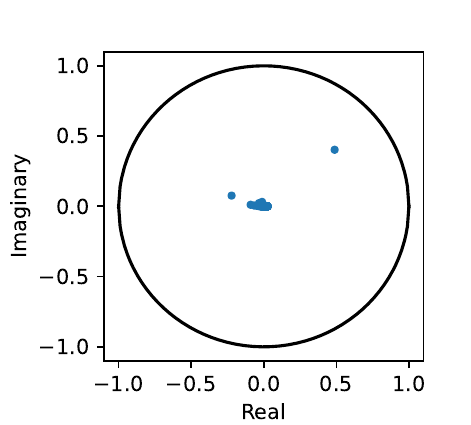}\label{fig:eigsZ_linear_50_50_omega1}} 
    \subfigure[\footnotesize Marmousi, $G_s=10$]{\image{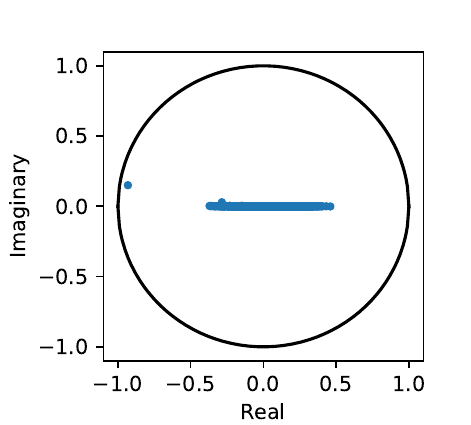}\label{fig:eigsZ_Marmousi_50_50_omega1}}
    \\
\end{center}
\caption{The spectrum of $Z$ from \eqref{eq:Z} for a $50\times 50$ cells slice of different media and frequencies.
}\label{fig:spec}
\end{figure}

Figure \ref{fig:rhoZ} shows the spectral radius of $Z$ for different models as a function of the shift. 
We implemented this by applying the power method on a mat-vec code of $Z$, without forming the matrix. 
We observe that increasing the shift lowers the spectral radius, as expected.
For linear, SEG or SEAM media of the given size, the minimal shift needed for the spectral radius to be lower than 1, is zero or negligible, as it is in the same order of magnitude as the natural attenuation that we use in our experiments. 
For Marmousi2 media of the given size, a larger shift of about $0.1$ is required for the spectral radius to be lower than 1.
A similar experiment for extremely heterogeneous (yet smooth) media is shown in
Figure \ref{fig:rhoZ_linear_extreme}, demonstrating the larger shift needed in this case. 
This synthetic model (which is non-realistic for most geophysical applications) is defined by stretching the range of $\rho$, $\lambda$ and $\mu$ of the original linear media by up to $10^6$, resulting in $\mu$ variations of up to 7 orders of magnitude. 
The range of $\mu$ values is our reference point, since it has the largest effect on the commutator.

\begin{figure}
\begin{center}
	\newcommand{\image}[1]{\includegraphics[width=0.45\linewidth]{#1}}
    \subfigure[Geophysical media]{\image{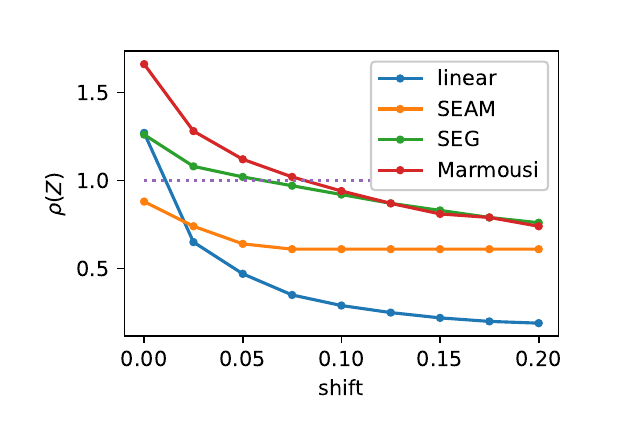}\label{fig:rhoZ}}    
    \subfigure[Extreme heterogeneity]{\image{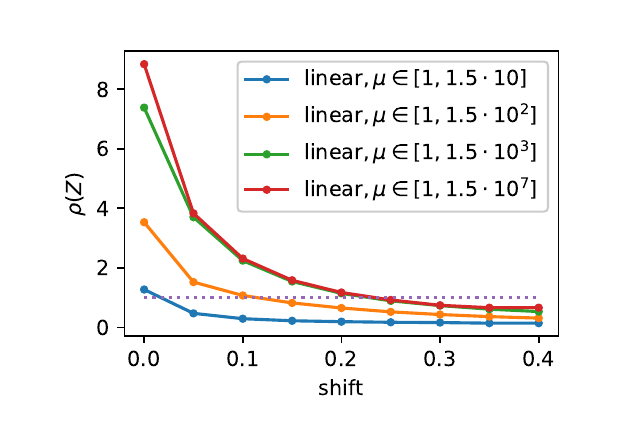}\label{fig:rhoZ_linear_extreme}}   
\\
\end{center}
\caption{The spectral radius of $Z$ from \eqref{eq:Z} as a function of the shift.
On the left, for linear media of size $400\times 128$ cells, SEG/EAGE salt media of size $160\times 320$, SEAM phase I media of size $256\times 512$ and Marmousi2 media of size $544\times 112$ cells (before the vertical extension).
On the right, for similar stretched linear media with $\mu$ variations of up to 7 orders of magnitude.
}\label{fig:rhoZ_both}
\end{figure}

Since computing spectral properties of $Z$ is not practical in large cases, we next examine how $Z$ acts on a vector.
We suggest that this behavior is governed by the interaction between the commutator and the acoustic Helmholtz inverse operators $A^{-1}$ and $H_p^{-1}$.
These operators may amplify or damp error differently, and they may also affect its smoothness.
We suggest that the commutator does not only vary in norm across different media, but also changes its action at different frequencies, depending on the smoothness of the vectors on which it is applied.

We investigate the action of $Z$ on the leading eigenvector.
We see, surprisingly, that most of the difference between the high-frequency and low-frequency cases comes from the commutator's action, although the two commutators are equal in norm.
Let $\lambda$ be the largest eigenvalue in Figure \ref{fig:eigsZ_linear_50_50_omega1} and $\bfv$ its corresponding normalized eigenvector.
Let $A,B,H_p,\Xi$ and $Z$ be the corresponding operators for the high-frequency problem of Figure \ref{fig:eigsZ_linear_50_50_omega1}, and 
$\tilde{A},\tilde{B},\tilde{H}_p,\tilde{\Xi}$ and $\tilde{Z}$ be their counterparts for the low-frequency problem of Figure \ref{fig:eigsZ_linear_50_50}.
Then, the total amplification differs by an order of magnitude: $\|Z\bfv\|_2 = 0.69$ versus $\|\tilde{Z}\bfv\|_2 = 0.078$.
Yet, most of this difference comes from the commutator $\Xi$, and not from the inverses $A^{-1}$ and $H_p^{-1}$,
\begin{equation} \label{eq:route_stray_eig}
\frac{\|H_p^{-1}\bfv\|_2}{\|\tilde{H}_p^{-1}\bfv\|_2} \approx 1.2, \qquad \frac{\|A^{-1}B^T H_p^{-1}\bfv\|_2}{\|\tilde{A}^{-1}\tilde{B}^T \tilde{H}_p^{-1}\bfv\|_2} \approx 2.5,
\end{equation}
which only magnify by a factor of about $1.2-2$ each.

Our suggested explanation to this phenomenon is twofold.
First, the commutator seems to act differently when applied to smooth versus rough vectors.
Second, the wavy nature of solutions for $A$ and $H_p$ (that is, applications of their inverses to given vectors) may affect the smoothness or roughness of the vectors.
Taken together, the commutator applied to these vectors can produce different outcomes --- even when using essentially the same commutator.

To demonstrate the influence of $A$ on the smoothness or roughness of its solutions, we perform another experiment. 
Let $\Xi$ be the commutator corresponding to the setting of Figure \ref{fig:eigsZ_linear_50_50_omega1} (for the full $200\times64$ cells media without slicing). 
Let $A_{high}$ be its corresponding $A$ block, $A_{low}$ the corresponding $A$ block for the low frequency problem of Figure \ref{fig:eigsZ_linear_50_50}, and let $A_{shifted}$ be similar to $A_{high}$ with an added complex shift $\alpha = 0.5$.
Let $\bfv$ be a random vector whose dimensions enables the product $\Xi \bfv$. 
We first produce algebraically-smooth vectors: 
we approximate $A_{high}^{-1}\bfv$ by applying 10 GMRES(10) iterations, and take $\bfe_{high}$ to be the error. 
This algebraically smooth vector with respect to $A_{high}$ is not geometrically smooth, as seen in Fig. \ref{fig:alg_smooth_A_high_linear}, since $A_{high}$ prioritize wavy solutions.
Similarly, we produce $\bfe_{low}$ and $\bfe_{shifted}$, which both have entries smaller in magnitude --- by factors of 9--10 and 3--4, respectively --- compared to $\bfe_{high}$,  notice the color-bars in Fig. \ref{fig:alg_smooth_A_low_linear} and \ref{fig:alg_smooth_A_shifted_linear}. 
For the Poisson-like $A_{low}$, the error is geometrically smooth.
Due to space constraints, we do not include here similar figures for SEG, SEAM and Marmousi2 media. 
From our experience, in highly nonsmooth media, all the algebraically-smooth vectors --- even w.r.t. $A_{low}$ --- are not geometrically smooth.

\begin{figure}
\begin{center}
	\newcommand{\image}[1]{\includegraphics[width=0.32\linewidth]{#1}}
    \subfigure[\footnotesize smooth w.r.t. $A_{high}$]{\image{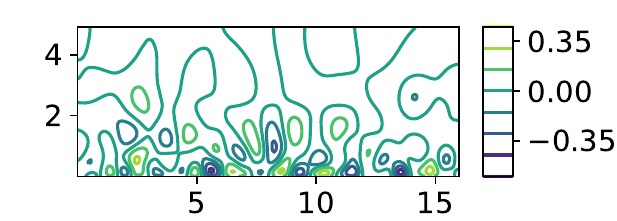}\label{fig:alg_smooth_A_high_linear}} 
    \subfigure[\footnotesize smooth w.r.t. $A_{low}$]{\image{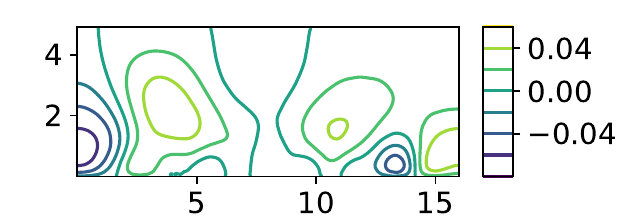}\label{fig:alg_smooth_A_low_linear}}   
        \subfigure[\footnotesize smooth w.r.t. $A_{shifted}$]{\image{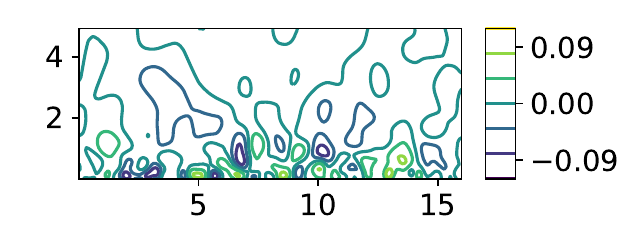}\label{fig:alg_smooth_A_shifted_linear}}    
\\
\end{center}
\caption{The vectors $\bfe_{high},\bfe_{low}$ and $\bfe_{shifted}$, obtained by approximating, e.g., $A_{high}^{-1}\bfv$ using 10 GMRES(10) iterations, for a random $\bfv$.
}\label{fig:alg_smooth}
\end{figure}

To demonstrate that the commutator might act differently on smooth and rough vectors, we apply $\Xi$ to a normalized version of the vectors produced above. We report the resulting amplification in Table \ref{tab:Xi}.
The $h^3$ factor compensates for $1/h^3$ that appears in $\Xi$.
For linear media, we see a significant damping of the error by the commutator in the low frequency case.
Together with the smoothness of $\bfe_{low}$ shown in Fig. \ref{fig:alg_smooth_A_low_linear}, it may suggest that the commutator damps smooth vectors more than it damps rough vectors.
The same behavior is observed for the (less-smooth) SEG model, although the effect is less significant.
For highly nonsmooth media, even the low-frequency algebraically smooth vectors are not necessarily geometrically smooth, and as a result, all the numbers in Table \ref{tab:Xi} for Marmousi media are similar. 
For the SEAM model, where the pattern of jumps in coefficients is similar to Marmousi2 yet the magnitude of the jumps is smaller, the qualitative behavior is the same, though the damping of errors is generally better.
The similarity of the results for \emph{normalized} versions of $\bfe_{high}$ and $\bfe_{shifted}$ can be misleading: note that
before normalization the magnitude of $\bfe_{shifted}$ is smaller, as shown in Fig. \ref{fig:alg_smooth_A_shifted_linear}.  We suggest that the effect of the shift in improving convergence is not related to smoothing, but to lowering the magnitude of the result.
Finally, one could infer that lower frequency and higher attenuation  improve the convergence, although this is not directly seen by Eq. \eqref{eq:route_stray_eig} following the route of a single eigenvector.

\begin{table} 
\centering
\begin{tabular}{l|cccc}
\toprule
  \mc{5}{c}{$h^3 \| \Xi \bfe\| / \|\bfe\|$}\\
\midrule
 & \small linear & \small SEAM  & \small SEG & \small Marmousi \\
\midrule
$\bfe = \bfe_{high}$ & 0.02177 &  0.12539 & 0.06387 & 0.51731 \\
$\bfe = \bfe_{low}$ & 0.00899 & 0.12202 & 0.04288 & 0.50624 \\
$\bfe = \bfe_{shifted}$ & 0.02222 & 0.12734 & 0.06308 & 0.51706 \\
\bottomrule
 \end{tabular}
\caption{Application of $\Xi$ to algebraically smooth vectors.}
\label{tab:Xi}
\end{table}

\subsection{Comparison with other approximate commutator methods}\label{subsec:approx_commutator_comparison}

In this subsection we compare the block-acoustic preconditioner to existing preconditioners described in Subsection \ref{subsec:approx_com_background}.
Nevertheless, the latter were developed for incompressible fluid flow problems, and are described in terms of finite elements discretization. 
We apply a naive implementation of $F_p$ from \eqref{eq:fp}\footnote{We took the left equation of \eqref{eq:fp}, but observed that the other version behaves similarly in all of our experiments.} and BFBt from \eqref{eq:wbfbt}, replacing the finite elements mass matrices by identity matrices, and the convection-diffusion leading block by our $A$ from \eqref{eq:mixed-discretized}.

Needless to say, the comparison of methods that were developed for such different problems is not quite equitable. 
There are two main differences between the Stokes-like systems and elastic Helmholtz saddle-point system:
the indefiniteness of the leading block, and the existence of a non-zero $C$ block.
We suggest two adaptations, to accommodate for the differences: testing the performance for a wide range of frequencies, including $\omega\approx0$ for which the leading block is SPD, and testing different Poisson ratios, including a nearly incompressible case.

In the first experiment, we apply the block-acoustic, $F_p$ and BFBt preconditioners to a model problem of homogeneous media with a grid size of $128\times 64$ cells, for various values of $\omega$. 
Figure \ref{fig:approx_com_const_lambda1} shows the GMRES(5) iteration count as a function of the frequency.
The smallest $\omega$ that we take is negligible (corresponds to $10^4$ grid points per shear wavelength), and resembles a linear elasticity problem.
The largest $\omega$ corresponds to about $11$ grid points per wavelength. 
For the linear elasticity problem, the preconditioners function similarly, whereas for high-frequency elastic Helmholtz, the block-acoustic preconditioner outperforms the other examined approximate commutator preconditioners. 
In the second experiment, we repeat the first experiment for nearly incompressible media. That is, an identical shear wave velocity and a higher Poisson ratio. 
In Figure \ref{fig:approx_com_const_lambda1000} we see that the three preconditioners solve the problem by a similar rate in the nearly incompressible case.
This result is somewhat surprising and requires further investigation, as the $F_p$ and $BFBt$ preconditioners were \emph{not} originally designed for saddle-point systems with an indefinite leading block, and to the best of our knowledge, were not investigated in this context previously.

To sum up, in linear elasticity problems, low frequency problems (of about 20 grid points per shear wavelength or more) and in the nearly incompressible case --- the methods preform similarly. 
Though, the aim of the elastic Helmholtz equation is modeling waves in \emph{solids}, which are compressible. 
Incompressible elastic Helmholtz is in fact reducible to acoustic Helmholtz, as shown in Subsection \ref{subsec:helm_background}. 
Moreover, we performed the above mentioned comparison on homogeneous media only, since the methods from Subsection \ref{subsec:approx_com_background}, in our implementation, preformed poorly for linear and Marmousi2 media.

\begin{figure}
\begin{center}
	\newcommand{\image}[1]{\includegraphics[width=0.4\linewidth]{#1}}
    \subfigure[\footnotesize $\sigma = 0.470$ (compressible)]{\image{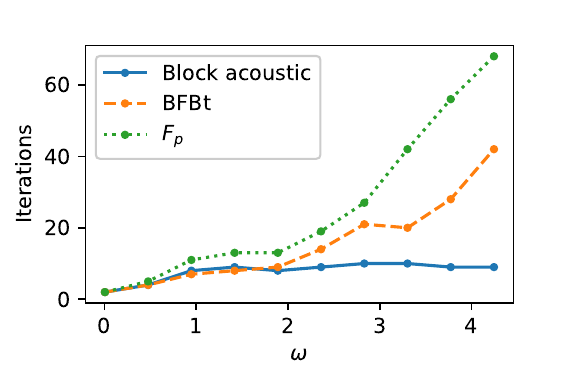}} \label{fig:approx_com_const_lambda1}
    \hspace{10pt}
    \subfigure[\footnotesize $\sigma = 0.499$ (neary incompressible)]{\image{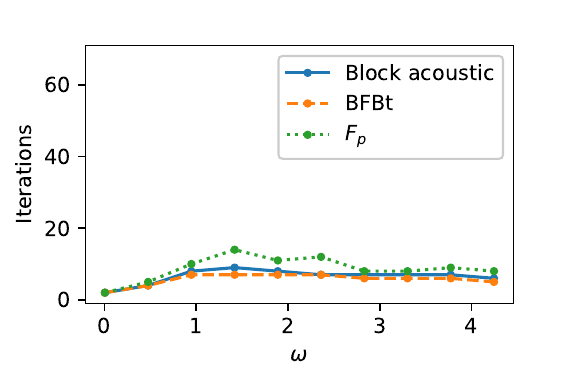}}\label{fig:approx_com_const_lambda1000} 
\\
\end{center}
\caption{Number of preconditioned GMRES(5) iterations needed for convergence for the elastic Helmholtz equation in constant media, as a function of the frequency.
The lowest frequency is negligible and corresponds to about $10^4$ grid points per shear wavelength, and the highest frequency corresponds to about 11 grid points per shear wavelength.
On the left, the default constant media was taken, and on the right, it was altered by multiplying the lam{\'e} coefficient $\lambda$ by $1000$.}
\label{fig:approx_com_const}
\end{figure}

\subsection{Monolithic vs. block-acoustic multirgid}\label{subsec:monolithic_comparison}

In this section we compare the performance of the block-acoustic multigrid preconditioner --- our preconditioner combined with CSLP solve of each acoustic block --- to the monolithic  multigrid preconditioner from \cite{treister2024hybrid}, described in Subsection \ref{subsec:sl}.
We first present the multigrid setup of the two methods, followed by a FLOP calculation for each. Finally, we show the iteration count and total computational cost for different media.

\subsubsection{Multigrid setup}

First, we describe the multigrid setup for the block-acoustic multigrid preconditioning.
In each preconditioning step, we approximately solve each block of the block-acoustic preconditioner by a $W(1,2)$ cycle of CSLP \cite{erlangga2006novel}, with damped Jacobi as a smoother. The damping parameters on the first, second and third levels are $0.8,0.8$ and $0.3$. 

We use different intergrid operators for each diagonal block, corresponding to the locations on the staggered grid. 
The diagonal blocks in \eqref{eq:prec}
(in 2D) are $A_1$, discretized on the $\bfu_1$ faces, $A_2$ on the $\bfu_2$ faces, and $H_p$  which is cell centered. For each of them, we take the restriction to be a Kroncker product of the 1D restrictions defined by the stencil $\begin{bmatrix}1 & 2 & 1 \end{bmatrix}$ for the nodal direction and by the stencil $\begin{bmatrix}1 & 3 & * & 3 & 1 \end{bmatrix}$ for the cell-centered direction. For instance, $\bfu_1$ is nodal in the $x$ direction and cell-centered in the $y$ direction and hence 
\begin{equation}\label{eq:Ru1}
R_{u_1} = \frac{1}{4}\begin{bmatrix}1 & 2 & 1 \end{bmatrix} \,\otimes\, \frac{1}{8}\begin{bmatrix}1 & 3 & * & 3 & 1 \end{bmatrix}
\end{equation}
is the restriction for the block $A_1$. The restriction $R_{u_2}$ is defined similarly. The restriction for $H_p$ is
\begin{equation}\label{eq:Rp}
R_p = \frac{1}{8}\begin{bmatrix}1 & 3 & * & 3 & 1 \end{bmatrix} \,\otimes\, \frac{1}{8}\begin{bmatrix}1 & 3 & * & 3 & 1 \end{bmatrix}.
\end{equation}
The prolongations are defined as $P_{u_1}=2R_{u_1}^T$ and similarly for $P_{u_2}$ and $P_p$. 

The coarse grid operator is determined by Galerkin coarsening. 
That is, e.g.,
\begin{equation}\label{eq:galerkin}
(H_p)_H = R_p(H_p)_hP_p.
\end{equation}
The coarsest grid problem is solved directly using LU decomposition.

For the monolithic multirgid, we use Vanka red-black smoother with relaxation parameters of $0.65,0.5$ and $0.3$ for the first, second and third grid, respectively. We use $W(1,1)$ cycles with Galerkin coarsening and solve the coarsest grid directly. As intergrid operators, we take a mixed version: the prolongation is
\begin{equation}\label{eq:P}
P = \text{blockdiag}(P_{u_1},P_{u_2},P_p)
\end{equation}
where $P_{u_1},P_{u_2}$ and $P_{p}$ are as in the block-acoustic case. However, the restriction is \emph{not} a scaled transpose of the prolongation: 
we take $R=\text{blockdiag}(\tilde{R}_{u_1},\tilde{R}_{u_2},\tilde{R}_p)$ where  
\begin{equation}\label{eq:tildeRu1}
\tilde{R}_{u_1} = \frac{1}{4}\begin{bmatrix}1 & 2 & 1 \end{bmatrix} \, \otimes \, \frac{1}{2}\begin{bmatrix}1 & * & 1 \end{bmatrix}
\text{ and }
\tilde{R}_p = \frac{1}{2}\begin{bmatrix}1 & * & 1 \end{bmatrix} \, \otimes \, \frac{1}{2}\begin{bmatrix}1 & * & 1 \end{bmatrix}
\end{equation}
and $\tilde{R}_{u_2}$ is defined similarly to $\tilde{R}_{u_1}$.

The choice of mixed intergrid in the monolithic case reduces the complexity of the operators, while keeping almost the same convergence rate, as mentioned in \cite{treister2024hybrid,yovel2024lfa}. 
For the block-acoustic case in 2D, however, we have seen by trial and error that the mixed intergrid is less favorable.
The different number of relaxation cycles in the $W(1,2)$ Jacobi compared to $W(1,1)$ Vanka, compensates for the additional computational cost. 
In fact, Vanka relaxation has about twice the computational cost of Jacobi relaxation (as calculated in the next subsection), but taking more than 3 relaxations per cycle did not lead to any additional improvement in convergence of the block-acoustic multigrid. 
For a more thorough explanation on the choice of multigrid components, see Subsection \ref{subsubsec:sensitivity}.

\subsubsection{FLOP count}

Since the comparison of the two different multigrid frameworks is not equitable, we first include a detailed FLOP count for each of the methods.
The results are displayed in Table \ref{tab:nnz}, including the sum of two FLOP sources: the analytically estimated computational cost for the entire preconditioning step except for the coarse solve, and the number of nonzeros in the resulting LU decomposition. 
Below, we demonstrate the analytic estimation, not including the coarsest solve, for the case of a two-level cycle.

\begin{table} 
\centering
\begin{tabular}{c|ccc|ccc}
\toprule
  \mc{7}{c}{FLOP count per cell}\\
\midrule
 &  \mc{3}{c|}{Block-acoustic multigrid} & \mc{3}{c}{Monolithic multigrid} \\
\midrule
Grid size& \small 2-level & \small 3-level & \small 4-level & \small 2-level  & \small 3-level & \small 4-level \\
\midrule
$400\times 128$ & 193 & 261 & 332 & 349 & 355 & 486 \\
$800\times 256$ & 223 & 274 & 340 & 446 & 396 & 501 \\
$1600\times 512$ & 277 & 291 & 346 & 558 & 445 & 522 \\
\bottomrule
 \end{tabular}
\caption{The total FLOP count per cycle per discretization cell, including the number of nonzeros of the LU factors. 
In the block acoustic case, for three $W(1,2)$ cycles with Jacobi relaxation for the blocks $A_1$, $A_2$ and $H_p$. In the monolithic case, for a $W(1,1)$ cycle with Vanka relaxation.}
\label{tab:nnz}
\end{table}

We count the costs of residual, relaxation and intergrid calculation for each method.
For the block-acoustic method, we add the cost of forming the corrected right-hand-side in \eqref{eq:mixed-distributed}.
We estimate the FLOPs by counting number of nonzeros.
Denoting the number of cells by $n_{cells}$, we neglect terms of order $\sqrt{n_{cells}}$.  

In each block-acoustic multigrid  preconditioning step, in 2D, three $W(1,2)$ cycles are applied, one for each of the 5-diagonal matrices $A_1$, $A_2$ and $H_p$ of size $n_{cells} \times n_{cells}$.
The residual calculation costs $15n_{cells}$. 
A Jacobi step includes residual calculation and an additional $1n_{cells}$, resulting in $54n_{cells}$.
The restriction $R_{u_1}$ from \eqref{eq:Ru1} and $R_{u_2}$ has 12 nonzeros per row, and the restriction $R_p$ from \eqref{eq:Rp} has 16 nonzeros per row, leading to a total cost of $20n_{cells}$ for intergrid operations.
The calculation of the corrected right-hand-side in \eqref{eq:mixed-distributed} costs $9n_{cells}$, because $A_p$ is 5-diagonal and $B$ is a concatenation of two 2-diagonal matrices. 
To sum up, excluding the LU solve in the coarse grid, one preconditioning cycle costs $98n_{cells}$ FLOPs.

For the monolithic multigrid, the residual computation sums up to $19n_{cells}$ nonzeros (counting the diagonals of all blocks in \eqref{eq:mixed-discretized}). 
One Vanka relaxation includes a residual calculation and additional $17n_{cells}$ FLOPs, when the special structure of the $5\times 5$ submatrices is exploited, as noted in \cite{yovel2024lfa}, Remark 3.1. 
The total relaxation cost for a $W(1,1)$ cycle is thus $72n_{cells}$. 
The prolongation cost is identical to the block-acoustic case, and the restriction is slightly cheaper: $\tilde{R}_{u_1}$ and $\tilde{R}_p$ from \eqref{eq:tildeRu1} has 6 and 4 nonzeros per row, respectively, each having $0.25n_{cells}$ rows. Hence, the total intergrid cost is $14n_{cells}$. It sums up to $105n_{cells}$ per cycle, excluding the LU cost.

A major part of the total computational cost is invested in the coarse grid solve.
In 2D, the coarse grid LU factors of the monolithic multigrid are $2.5$ times heavier than the three pairs of factors in the block-acoustic multigrid altogether.
It can be explained by the coupling of the multi-diagonal blocks in the elastic Helmholtz operator in mixed formulation \eqref{eq:mixed-discretized}.
This coupling, together with the Galerkin coarsening, leads to rather dense LU factors.
In 3D, the fill-in is more severe: 
we observed a 5 times heavier coarse solve for monolithic method, compared to the block-acoustic, for a two-level method applied to a toy-size grid of $64\times 64\times 32$ cells.
Since the fill-in is characterized by super-linear growth, for large 3D grids the LU decomposition constitutes the majority of the overall cost, making the advantage of the block-acoustic approach even bigger.

\subsubsection{Iteration count and computational cost}

Tables \ref{tab:2Dlinear} and \ref{tab:2Dmarmousi} show the iteration count of the block-acoustic multigrid vs. monolithic multigrid. 
The shift for each experiment was chosen by trial and error, to optimize the convergence.
The total computational cost (given in parentheses) is computed by multiplying the iteration number by the corresponding number from Table \ref{tab:nnz} and normalizing\footnote{For brevity, Table \ref{tab:nnz} includes only the grid sizes used for the linear model. 
For other media, we use similarly calculated values that we do not present here.}.

\begin{table} 
\centering
\begin{tabular}{cc|ccc|ccc}
\toprule
  \mc{8}{c}{GMRES(5) iteration count for linear media}\\
\midrule
 & &  \mc{3}{c|}{Block-acoustic multigrid} & \mc{3}{c}{Monolithic multigrid} \\
\midrule
 & & \small 2-level & \small 3-level & \small 4-level & \small 2-level  & \small 3-level & \small 4-level \\
Grid size & $\omega$ & \small $\alpha = 0.1$ &  \small $\alpha=0.2$  & \small $\alpha=0.4$   & \small $\alpha=0.1$   & \small $\alpha=0.4$ & \small $\alpha=0.5 $\\
\midrule
\small $400\times 128$ & \small $3.5\pi$ & \small 31 (0.6) & \small 49 (1.3) & \small 90 (3) & \small 31 (1.1) & \small 79 (2.8) & \small 98 (4.8) \\ 
\small $800\times 256$ & \small $7\pi$ & \small 49 (1.1) & \small 93 (2.5) & \small 171 (5.8) & \small 67 (3) & \small 171 (6.8) & \small 216 (10.8) \\ 
\small $1600\times 512$ & \small $14\pi$ & \small 86 (2.4) & \small 204 (6) & \small 361 (12.5) & \small 161 (9) & \small 414 (18.4) & \small 519 (27.1) \\ 
\bottomrule
 \end{tabular}
\caption{Number of preconditioning cycles needed for convergence with the monolithic  or block-acoustic multigrid, for the 2D elastic Helmholtz equation in linear media. In the monolithic preconditioner, a red-black cell-wise Vanka smoother is used for a $W(1,1)$ cycle, and in the block-acoustic  multigrid, a damped Jacobi smoother is used for a $W(1,2)$ cycle. In parentheses, ten-thousands of $n_{cells}$ FLOPs.
The shift parameter $\alpha$ from \eqref{eq:shift} was chosen to optimize convergence, by an exhaustive search in a resolution of 0.1.
}
\label{tab:2Dlinear}
\end{table}

\begin{table} 
\centering
\begin{tabular}{cc|ccc|ccc}
\toprule
  \mc{8}{c}{GMRES(5) iteration count for Marmousi2 media}\\
\midrule
 & & \mc{3}{c|}{Block-acoustic multigrid} & \mc{3}{c}{Monolithic multigrid} \\
\midrule
 & & \small 2-level & \small 3-level & \small 4-level & \small 2-level  & \small 3-level & \small 4-level \\
Grid size & $\omega$ & \small $\alpha = 0.2$ &  \small $\alpha=0.2$    & \small $\alpha=0.5$  & \small $\alpha=0.1$   & \small $\alpha=0.2$  & \small $\alpha=0.6$ \\
\midrule
\small $544\times 128$ & \small $1.8\pi$ & \small 78 (1.5) & \small 78 (2) & \small 143 (4.7) & \small 30 (1) & \small 54 (1.9)  & \small 152 (7.4) \\
\small $1088\times 240$ & \small $3.6\pi$ & \small 146 (3.2) & \small 143 (3.9) & \small 305 (10.3) & \small 65 (2.9) & \small 124 (4.8) & \small 386 (19.3) \\
\small $2176\times 464$ & \small $7.2\pi$ & \small 445 (12.1) & \small 444 (12.9)  & \small 662 (23) & \small 140 (7.3) & \small 408 (17.6) & \small 813 (42.4) \\
\bottomrule
 \end{tabular}
\caption{Number of preconditioning cycles needed for convergence with the monolithic  or block-acoustic multigrid, for the 2D elastic Helmholtz equation in Marmousi2 media. In the monolithic preconditioner, a red-black cell-wise Vanka smoother is used for a $W(1,1)$ cycle, and in the block-acoustic  multigrid, a damped Jacobi smoother is used for a $W(1,2)$ cycle. In parentheses, ten-thousands of $n_{cells}$ FLOPs. 
The shift parameter $\alpha$ from \eqref{eq:shift} was chosen to optimize convergence, by an exhaustive search in a resolution of 0.1.
}
\label{tab:2Dmarmousi}
\end{table}

Table \ref{tab:2Dlinear} includes the GMRES(5) iteration count and total computational cost for linear media.
 The block-acoustic multigrid achieves a significantly smaller iteration count for 3-level, 4-level and for the larger grids with 2-level. Taking into account the cost per iteration, the block-acoustic multigrid preconditioner accelerates the solution by a factor of 2 and more.

As explained in Subsection \ref{subsec:sl}, full scalability with respect to the grid size cannot be expected for any shifted method.
However, the block-acoustic multigrid yields significantly improved scaling:
the average growth in iterations for the block-acoustic approach varies from $\times1.7$ for the 2-level method to $\times2$ for the 4-level method, while for the monolithic approach the growth factor is about $\times 2.3$, regardless of the levels.

Table \ref{tab:2Dmarmousi} shows the GMRES(5) iteration count and total computational cost for Marmousi2 media.
The iteration count for the 2-level method, as well as the iteration growth factor, is significantly higher for the block-acoustic method, which can be explained by the larger shift required for the method.
For the 3-level method, the iteration count is comparable between the two methods, and for the 4-level method the bock-acoustic multigrid requires less shift and less iterations.
The iteration growth factor for 3- and 4-levels method shows improved scalability: $\times 2.4$ compared to $\times 2.7$ for the 3-level method, and $\times 2.15$ compared to $\times 2.3$ for the 4-level method.
For large enough shift values, the block-acoustic achieves a significantly lower computational cost --- lower by about a factor of 2 for large grids --- and an improved scalability, both for smooth and nonsmooth media.

We note that the monolithic 2-level method has the lowest FLOP count in both Tables \ref{tab:2Dlinear} and \ref{tab:2Dmarmousi}.
However, 2-level methods are not applicable for large 3D cases, since the coarse grid operator is too large to solve exactly. 
Our preconditioner outperforms the monolithic multigrid in larger number of levels, and the required shift seems not to be restrictive, as it stems from the shifted Laplacian multigrid method applied to each block.

\subsubsection{Sensitivity to multigrid components and discretization}
\label{subsubsec:sensitivity}

Throughout the experiments, many choices of multigrid components have been done. 
The choice of smoother, number of smoothing cycles, intergrid operators and other components, might affect the performance of our method, similarly to their influence on the shifted Laplacian multigrid preconditioner.
Although the focus of this paper is \emph{not} the designing of an optimal multigrid framework, we compare here several different components, to justify our previous choices.

In Table \ref{tab:sensitivity} we fix the size of the problem, the media and the number of levels, and vary the intergrid, smoother, number of pre and post relaxations and the shift.
We see that damped Jacobi and Gauss-Seidel perform similarly as smoothers. 
Increasing the total number of relaxations is beneficial to some extent (we did not include here results for $W(2,2)$ cycles since the performance deteriorates compared to $W(1,2)$ cycles). 
We refer to our default intergrid choice, $R_{u_1}$ and $R_{u_2}$ from \eqref{eq:Ru1} and $R_p$ from \eqref{eq:Rp}, as \emph{bilinear} intergrid, and to the choice of bilinear prolongation and lower order restriction, such as $\tilde{R}_{u_1}$ from \eqref{eq:tildeRu1}, as \emph{mixed} intergrid. 
Table \ref{tab:sensitivity} shows that the bilinear intergrid performs significantly better, and require a lower shift.

\begin{table} 
\centering
\begin{tabular}{c|cc|cc|cc|cc}
\toprule
  \mc{9}{c}{GMRES(5) iterations count for different MG parameters}\\
\midrule\
 & \mc{4}{c|}{Damped Jacobi smoother} & \mc{4}{c}{Gauss-Seidel smoother} \\
 \midrule
 & \mc{2}{c|}{\small $W(1,1)$} & \mc{2}{c|}{\small $W(1,2)$} & \mc{2}{c|}{\small $W(1,1)$} & \mc{2}{c}{\small $W(1,2)$} \\
Intergrid & \small $\alpha = 0.4$ & \small $\alpha = 0.5$ & \small $\alpha = 0.4$ & \small $\alpha = 0.5$ & \small $\alpha = 0.4$ & \small $\alpha = 0.5$ & \small $\alpha = 0.4$ & \small $\alpha = 0.5$
\\
\midrule
Bilinear  & 106 & 109 & 90 & 101 & 105 & 110 & 88 & 98 
\\
Mixed & 136 & 134 & 105 & 103 & 136 & 134 & 104 & 100 
\\
\bottomrule
 \end{tabular}
\caption{Number of preconditioning cycles needed for convergence in 2D linear media of size $400\times 128$ and $\omega=3.5\pi$, preconditioned by block-acoustic multigrid with different 4-level cycles.
}
\label{tab:sensitivity}
\end{table}

Discretization is not a proper multigrid component. 
Nevertheless, wider discretization stencils (e.g., 9-points instead of 5-points in 2D) were shown to yield better multigrid convergence for the shifted Laplacian applied to acoustic Helmholtz problems, see \cite{umetani2009multigrid}.
Some high-order finite differences discretizations for elastic wave propagation were suggested in
\cite{virieux1986p, vstekl1998accurate, gu201321}.
These discretizations are designed for the original formulation of the elastic Helmholtz equation, and are therefore not directly applicable to the mixed formulation.
A discretization that optimizes multigrid convergence was recently suggested for elastic Helmholtz equation in mixed formulation in \cite{yovel2024lfa}. The weights of this discretization are tuned by local Fourier analysis (LFA), a predictive tool for convergence of multigrid cycles.
The LFA-tuned discretization \cite{yovel2024lfa} is MAC-based, though different weights are chosen for the finite difference stencil.
The resulting stencil is wider, and offers improvements in truncation error, numerical dispersion and multigrid convergence. 
However, the resulting discretization is not symmetric.
We denote the elastic Helmholtz operator discretized by the LFA-tuned discretization by:
\begin{equation}\label{eq:LFA-tuned}
\mathcal{H}_\beta = \begin{bmatrix}
A_\beta & B^T \\
B_\beta & -C
\end{bmatrix}.
\end{equation}
Note that the gradient is \emph{not} (minus) transpose of the divergence.
To apply the block-acoustic preconditioner to this discretization, we replace $B$ by $B_\beta$ in the derivation \eqref{eq:mixed-distributed} and in the construction of $A_p$ in \eqref{eq:Ap}.

\begin{table} 
\centering
\begin{tabular}{cc|cc|cc}
\toprule
  \mc{6}{c}{GMRES(5) iterations count for SEG/EAGE media}\\
\midrule
\mc{2}{c|}{Discretization} & \mc{2}{c|}{Standard} & \mc{2}{c}{LFA-tuned} \\
 \midrule
Grid size (cells) & $\omega$ & $\quad\mathcal{H}\quad$ & $\quad A_2\quad$ & $\quad\mathcal{H}\quad$ & $\quad A_2\quad$ \\
\midrule
$128\times 256$ & $2.76\pi$ & 34 & 31 & 34 & 30  \\
$256\times 512$ & $5.52\pi$ & 52 & 62 & 55 & 58 \\
$512\times 1024$ & $11.04\pi$ & 107 & 217 & 117 &  204 \\
\bottomrule
 \end{tabular}
\caption{Number of preconditioning cycles needed for convergence for the 2D elastic Helmholtz equation in SEG/EAGE salt media.
We use 3-level $W(1,2)$-cycles with a shift parameter $\alpha=0.1$ and damping parameter 0.8, applied to $\mathcal{H}$ with block-acoustic multigrid or to the block $A_2$ with shifted Laplacian multigrid.} 
\label{tab:disc-sensitivity}
\end{table}

In Table \ref{tab:disc-sensitivity} we compare the solution of the 2D elastic Helmholtz equation in SEG/EAGE salt media, for standard MAC discretization \eqref{eq:mixed-discretized} and for the LFA-tuned discretization \eqref{eq:LFA-tuned}.
To explore the relations between the block-preconditioner and its blocks, we also include results for solving the block $A_2$ by shifted Laplacian multigrid preconditioner. 
The convergence rate is similar for the standard and the LFA-tuned discretization.
In \cite{yovel2024lfa}, the monolithic multigrid solver showed an improvement by $20$--$50\%$ when using re-discretization, and the results here shows smaller or no advantage for this discretization when using Galerkin coarsening.
Yet, there is a slight improvement for the block $A_2$ when using the LFA-tuned discretization, and we suspect that the lack of improvement for the block-acoustic preconditioner applied to $\mathcal{H}$ stems from the non-symmetry of the discretization \eqref{eq:LFA-tuned}, since the commutator ``ignores'' the structure of the gradient.

While Helmholtz problems are sometimes solved on unstructured meshes \cite{babuvska1995generalized, conen2014coarse, kirby2021finite} (especially scattering problems), in this paper we focus on geophysical applications which are typically handled on structured grids.
The analysis above be relevant to unstructured meshes as well, which is beyond the scope of this work.

\subsection{Three-dimensional experiments}\label{subsec:3d}

In this subsection we demonstrate the applicability of our preconditioner, combined with a multigrid solve of each block, to 3D problems.
We preform the experiments on the Overthrust model \cite{aminzadeh19963}, a geophysical 3D model with jumping coefficients. 
This is an acoustic model, and its pressure wave velocity is depicted in Figure \ref{fig:Overthrust}. 
There are several approaches in literature for extending this model to elastic media. 
Motivated by the approach suggested in \cite{brossier2010data},
we extend the originally acoustic model to elastic assuming a constant Poisson ratio $\sigma=0.33$ by defining the shear wave velocity as $V_s = 0.5 V_p$.
The density is defined by a linear stretch of the wave velocity $\rho = 0.25 V_p + 1.2$ such that the density range is physical $\rho\in[1.76,2.7]$ and similar to the 2D elastic model Marmousi2.
Another approach suggested in \cite{scarinci2023robust},
uses the Gardner's law \cite{gardner1974formation} to calculate the density: 
\begin{equation}
\rho = 1.74 \cdot V_p ^\frac{1}{4}
\end{equation}
and after defining a range of shear wave velocities $V_s\in[1.09,3.53]$ we calculate the shear wave velocity by the following linear transformation applied to the pressure wave velocity:
\begin{equation}
V_s = \min(V_s) + \frac{\left(V_p - \min(V_p)\right)\left(\max(V_s)-\min(V_s)\right)}{\max(V_p)-\min(V_p)}.
\end{equation}
It results in a spatially varying Poisson ratio in the range $\sigma\in[0.23,0.39]$.
The model is shallow, and extended similarly to Marmousi2.

\begin{figure}
  \centering
  \includegraphics[width=0.68\textwidth]{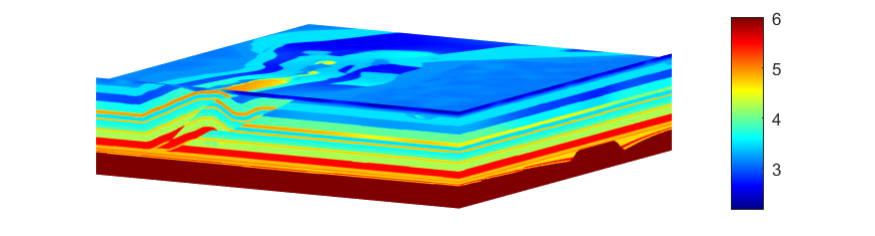}\\
  \caption{The Overthrust model, pressure wave velocity. Velocity units: $km/sec$.}\label{fig:Overthrust}
\end{figure}

The multigrid framework is similar to Subsection \ref{subsec:monolithic_comparison}, with two modifications:
First, we use mixed intergrid as in \eqref{eq:tildeRu1}. 
Second, we take Jacobi $W(2,2)$ cycles with damping parameters of 0.8, 0.8 and 0.2 for the first, second and third level respectively.
These modifications, as we observed by trial and error, improve convergence of the method for the 3D case.

The results are included in Table \ref{tab:3Doverthrust}, demonstrating that the block-acoustic preconditioner enables the solution of the 3D elastic Helmholtz equation in the Overthrust medium with a small number of iterations.
The method performs similarly for the different elastic extensions of the acoustic Overthrust model.
The two versions has similar shear wave velocity ranges and the chosen frequency corresponds to about 10.3-10.5 grid points per wavelength for both.
Moreover, the memory consumption is notably low: 
we manage to solve here a problem with about 46 million unknowns using a laptop with 32 GB memory only.
For the sake of comparison, in \cite{treister2024hybrid} and in \cite{yovel2024lfa} the largest systems to be solved by a 256 GB workstation had about 70 million unknowns, although the LU factors of the coarse grid were \emph{not} computed and saved in any of these works. A domain decomposition coarse grid solver had been applied in \cite{treister2024hybrid}, and the coarse grid in \cite{yovel2024lfa} was solved iteratively with hybrid Kaczmarz relaxation as a preconditioner.

\begin{table} 
\centering
\begin{tabular}{cc|ccc|ccc}
\toprule
  \mc{8}{c}{Iteration count for 3D Overthrust media}\\
\midrule
\mc{2}{c|}{Poisson ratio} & \mc{3}{c|}{$\sigma \equiv 0.33$} & \mc{3}{c}{$\sigma\in[0.23,0.39]$} \\
& & \small 2-level & \small 3-level & \small 4-level & \small 2-level & \small 3-level & \small 4-level  \\
Grid size & $\omega$ &  \small $\alpha=0.1$  & \small $\alpha=0.2$ & \small $\alpha=0.4$ &  \small $\alpha=0.1$  & \small $\alpha=0.2$ & \small $\alpha=0.4$  \\
\midrule
 $128\times128\times56$ & $1.36\pi$ & 24 & 22 & 32 & 26 & 24 & 30 \\
 $192\times192\times72$ & $2.02\pi$ & 29 & 29 & 49 & 30 & 29 & 41 \\
 $256\times256\times96$ & $2.65\pi$ & oom & 38 & 69 & oom & 38 & 53 \\
 $320\times320\times112$ & $3.30\pi$ & oom & 43 & 98 & oom & 46 & 67 \\
\bottomrule
 \end{tabular}
\caption{Number of block-acoustic preconditioned GMRES(5) iterations needed for convergence with block-acoustic multigrid, for the 3D elastic Helmholtz equation in Overthrust media. In each preconditioning step, one $W(2,2)$ cycle with mixed intergrid is applied with damped Jacobi relaxation. The abbreviation ``oom'' stands for an out of memory error on a 32 GB RAM laptop.
}
\label{tab:3Doverthrust}
\end{table}

In this work, we solve large 3D problems without any special adaptation to deal with the coarse grid.
The decoupled block structures enables, besides the improved sparsity of the LU factors, an additional advantage thanks to the natural parallelization.
Since each of the blocks can be solved separately, one can save the 4 pairs of LU factors to the disc, extracting them only when needed, thus reducing by a factor of 4 the total RAM memory consumption of the coarse grid solve. 
The grid sizes in Table \ref{tab:3Doverthrust} were achieved without utilizing disc memory. 
This significant memory save --- which is greater than a factor of four --- can be explained by the fill-in of the LU decomposition.
The saddle-point matrix is not only four times larger than each of its blocks, but also suffers from a severe fill-in due to its sparsity pattern.

\section{Conclusion}\label{sec:conclusion}

In this work we introduced a block-acoustic preconditioner for the elastic Helmholtz equation. 
Building upon the commutation of the underlying operators in the continuous space, our key idea is designing an approximation of the leading block discretized in the pressure's space that will enable a small enough commutator.
It results in a block-triangular preconditioner whose diagonal blocks are acoustic Helmholtz operators, that enables a reduction from elastic to acoustic Helmholtz, and scales well with respect to the Poisson ratio.
Our theoretical results unravel the role of the commutator in the preconditioning of systems with an indefinite leading block.
We demonstrated both analytically and numerically that a small complex shift might be necessary, in highly heterogeneous media, to promise fast convergence. 
However, when problems with smooth media are solved without a complex shift, the block-acoustic preconditioner is scalable with respect to the grid size.

We designed an efficient combination of our preconditioner with shifted Laplacian multigrid applied to each acoustic block, and compared the method to a recent monolithic multigrid preconditioner.
The resulting block-acoustic multigrid  achieves significantly lower computational cost for smooth media.
For challenging geophysical media with jumping coefficients, our approach attains comparable iteration count, yet lower computational cost, compared to the monolithic multigrid.
The block structure of our preconditioner 
enables a huge memory save. 
Overall, the block-acoustic preconditioner makes the problem of elastic wave propagation in large 3D cases applicable using reasonable computational resources.

\section*{Acknowledgements}
The authors are grateful to the late Prof. Howard Elman, for inspiring discussions, for his careful reading of the manuscript, and for helpful comments and pointers to the literature.

\bibliographystyle{siamplain}
\bibliography{Helmholtzbib}
\end{document}